\DeclareOldFontCommand{\rm}{\normalfont\rmfamily}{\mathrm}
\def\F{\Bbb F}
\def\ad{\operatorname{ad}}
\def\Der{\operatorname{Der}}
\def\dim{\operatorname{dim}}
\def\End{\operatorname{End}}
\def\Hom{\operatorname{Hom}}
\def\Ker{\operatorname{Ker}}
\def\Id{\operatorname{Id}}
\def\Im{\operatorname{Im}}
\def\g{\frak g}
\def\gl{\frak{gl}}
\def\h{\frak h}
\theoremstyle{plain}\swapnumbers
\newtheorem{Theorem}{Theorem}[section]
\newtheorem{Prop}[Theorem]{Proposition}
\newtheorem{Cor}[Theorem]{Corollary}
\title{Invariant metrics on current Lie algebras}
\author
{R. Garc\'{\i}a-Delgado}
\address{Centro de Investigaci\'on en Matem\'aticas A. C.,  Unidad M\'erida; Yucat\'an, M\'exico, Carretera Sierra Papacal Chuburna Puerto Km 5, 97302 Sierra Papacal, Yuc.}
\email{rosendo.garciadelgado@alumnos.uaslp.edu.mx}
\keywords {Current Lie algebras; Invariant metric; Centroid; Commutative and associative algebra.}
\subjclass{
Primary:
17A45  	
17B05  	
17B56  	
 Secondary:
15A63  	
17B60  	
}
\date{\today}
\begin{document}

\maketitle

\begin{abstract}
In this work we state conditions for a current Lie algebra $\g \otimes \mathcal{S}$ to admit an invariant metric, where $\g$ is a quadratic Lie algebra and $\mathcal{S}$ is an associative and commutative algebra with unit. We also consider the reciprocal: if $\g \otimes \mathcal{S}$ admits an invariant metric, we state necessary and sufficient conditions for $\g$ to admit an invariant metric. In particular, we show that if $\g$ is an indecomposable quadratic Lie algebra, then $\g \otimes \mathcal{S}$ admits an invariant metric if and only if $\mathcal{S}$ also admits an invariant, symmetric and non-degenerate bilinear form. In addition, we prove a theorem similar to the double extension for $\g \otimes \mathcal{S}$, where $\g$ is an indecomposable, nilpotent and quadratic Lie algebra.
\end{abstract}

\section*{Introduction}

A \emph{quadratic Lie algebra} is a Lie algebra $\g$ over a field $\F$, with Lie bracket $[\,\cdot\,,\,\cdot\,]$, equipped with a non-degenerate, symmetric and invariant bilinear form $B:\g \times \g \to \F$. The invariant property means that $B(x,[y,z])=B([x,y],z)$, for all $x,y$ and $z$ in $\g$. A bilinear form $B$ satisfying these conditions is called an \emph{invariant metric}. An example of a quadratic Lie algebra is a semisimple Lie algebra with Killing form as its invariant metric. In the solvable case, the Killing form degenerates, so this bilinear form cannot be an invariant metric. In fact, there are solvable Lie algebras that do not admit an invariant metric. The question of whether quadratic non-semisimple Lie algebras exist has an affirmative answer. An immediate example of such structures is a reductive Lie algebra. Another more elaborated example is as follows: Let $\g$ be a Lie algebra and consider the co-adjoint representation $\ad^{\ast}:\g \to \g^{\ast}$. The vector space $\g \oplus \g^{\ast}$, with Lie bracket defined by $[x+\alpha,y+\beta]_{\g \oplus \g^{\ast}}=[x,y]+\ad^{\ast}(x)(\beta)-\ad^{\ast}(y)(\alpha)$, and the invariant metric defined by $B(x+\alpha,y+\beta)=\alpha(y)+\beta(x)$, for all $x,y$ in $\g$ and $\alpha,\beta$ in $\g^{\ast}$, make $\g \oplus \g^{\ast}$ into a quadratic non-semisimple Lie algebra, as $\g^{\ast}$ is an Abelian ideal. Thus, the class of quadratic Lie algebras contains strictly the semisimple Lie algebras. Since the semisimple Lie algebras were classified (see \cite{Hum}), the search for results to classify families of quadratic non-semisimple Lie algebras has been an active topic of research (see, for instance \cite{Bajo}, \cite{Duong} or \cite{Kath}).
\smallskip

In this work, we state results for quadratic Lie algebras constructed with the tensor product as described below. Let $\g$ be a Lie algebra and $\mathcal{S}$ be a commutative and associative algebra over $\F$. The \emph{current Lie algebra of $\g$ by $\mathcal{S}$} is the Lie algebra in $\g \otimes \mathcal{S}$, with bracket:
$$
[x \otimes s,y \otimes t]_{\g \otimes \mathcal{S}}=[x,y] \otimes s\,t,\,\text{ for all }x,y \in \g \text{ and }s,t \in \mathcal{S}.
$$
The process for extending a Lie algebra through tensor product has been applied to generalize results in Lie theory. For instance, in \cite{Nadina} is calculated the minimal dimension of a representation $\rho\!:\!\h_n \!\otimes \mathcal{S} \!\to \!\gl_{\F}(V)$, where $\h_n$ is the $2n+1$-dimensional Heisenberg Lie algebra and $\mathcal{S}$ is a quotient of the polynomial rings $\F[X]$. In \cite{Ochoa} is obtained the Levi decomposition for the Lie algebra derivations $\Der(\g \otimes \mathcal{S})$. Concerning to quadratic Lie algebras, in \cite{Zhu-Meng} is studied the case when $\g$ is a simple Lie algebra and $\g \otimes \mathcal{S}$ admits an invariant metric.
\smallskip

The aim of this work is to state results that can be applied for quadratic current Lie algebras of non-semisimple Lie algebras and to determine conditions so that a current Lie algebra admits an invariant metric. If $\g$ is a quadratic Lie algebra, in \textbf{Thm. \ref{corolario caracterizacion current}}, we state necessary and sufficient conditions for $\g \otimes \mathcal{S}$ to admit an invariant metric. We also prove that if $\g$ is an indecomposable quadratic Lie algebra, then $\g \otimes \mathcal{S}$ admits an invariant metric if and only if $\mathcal{S}$ also admits a non-degenerate, symmetric and invariant bilinear form (see \textbf{Cor. \ref{corolario current}}). This result generalizes the case when $\g$ is a simple Lie algebra (see \cite{Zhu-Meng}). In addition, we state a result similar to the double extension for $\g \otimes \mathcal{S}$, where $\g$ is an indecomposable, nilpotent and quadratic Lie algebra (see \textbf{Thm. \ref{doble extension}}). In \S 4, we assume that $\g \otimes \mathcal{S}$ admits an invariant metric and we state conditions so that $\g$ admits an invariant metric. For that, we consider a surjective linear map $F:\g \otimes \mathcal{S} \to \g$, satisfying $F([x \otimes s,y \otimes 1]_{\g \otimes \mathcal{S}})=[F(x \otimes s),y]$, for all $x,y$ in $\g$ and $s$ in $\mathcal{S}$. For each $s$ in $\mathcal{S}$ and a linear map $H:\g \to \g \otimes \mathcal{S}$ such that $F \circ H=\Id_{g}$, we set a linear map $\psi_s:\g \to \g^{\ast}$, and necessary and sufficient conditions to determine if $\psi_s$ yields an invariant metric on $\g$ (see \textbf{Thm. \ref{corolario caracterizacion}}). Finally, in \textbf{Prop. \ref{Prop ejemplo}}, we show a particular case when the existence of an invariant metric on $\g \otimes \mathcal{S}$ implies that $\g$ also admits one.
\smallskip

All algebras and vector spaces in this work are finite-dimensional over a field $\F$ of zero characteristic. 
\smallskip

Let $\g \otimes \mathcal{S}$ be the current Lie algebra of $\g$ by $\mathcal{S}$. For each $s$ in $\mathcal{S}$, let $\iota_s:\g \to \g \otimes \mathcal{S}$ be the linear map defined by $\iota_s(x)=x \otimes s$, for all $x$ in $\g$. If $\mathcal{S}$ has unit $1$, then $\iota_1$ is an injective Lie algebra homomorphism. Then we may say that the bracket in $\g \otimes \mathcal{S}$ extends the bracket in $\g$. Henceforth we shall use the same symbol for both brackets; that is, $[x \otimes s,y \otimes t]=[x,y] \otimes s\,t$ for all $x,y$ in $\g$ and $s,t$ in $\mathcal{S}$.
\smallskip

A \emph{centroid} of a Lie algebra $\g$ is a linear map $T:\g \to \g$ that commutes with the adjoint representation of $\g$, that is $T\circ \ad(x)=\ad(T(x))$, for all $x$ in $\g$. The symbol $\Gamma(\g)$ denotes the vector space generated by centroids of $\g$. 
\smallskip

We write $C(\g)=\{x \in \g\mid [x,y]=0,\text{ for all }y \in \g\}$, for the \emph{center} of the Lie algebra $\g$. 
\smallskip

For a non-degenerate, symmetric and bilinear form 
$B:\g\times\g\to\Bbb F$, let $B^{\flat}:\g\to\g^\ast$ be the map defined by
$B^{\flat}(x)\,(y)=B(x,y)$, for all $x$ and $y$ in $\g$.
Since $\dim_{\F}\g$ is assumed to be finite, one may consider the inverse map
$B^\sharp:\g^*\to\g$, of $B^{\flat}$. If $B$ is an invariant metric, then the linear map $B^{\flat}$ is an \emph{isomorphism of $\g$-modules}; that is,  $B^{\flat} \circ \ad(x)=\ad^\ast\!(x) \circ B^{\flat}$, for all $x$ in $\g$. 

\section{The Lie algebra $\g$ admits an invariant metric}

In this section, we state conditions so that $\g \otimes \mathcal{S}$ admits an invariant metric, where $\g$ is a quadratic Lie algebra. We start by setting a linear map $\Psi:\g \otimes \mathcal{S} \to \left(\g \otimes \mathcal{S} \right)^{\ast}$ and conditions for it to be a morphism of $\g \otimes \mathcal{S}$-modules.

\begin{Prop}\label{teorema caracterizacion current}{\sl
Let $\g$ be a quadratic Lie algebra over $\F$ with invariant metric $B$. Let $\mathcal{S}$ be an associative and commutative algebra with unit over $\F$. Then the following statements are equivalents:}
\smallskip

\rm{\textbf{(i)}} {\sl There exists a morphism of $\g \otimes \mathcal{S}$-modules $\Psi:\g \otimes \mathcal{S} \to \left(\g \otimes \mathcal{S}\right)^{\ast}$.}
\smallskip

\rm{\textbf{(ii)}} {\sl There exists a bilinear map $\alpha:\mathcal{S} \times \mathcal{S} \to\Gamma\left(\g\right)$, such that:}
\begin{equation}\label{iii-3}
\alpha(s,s^{\prime})([\g,\g])=\alpha(ss^{\prime},1)([\g,\g]),\quad \text{ for all } s,s^{\prime} \in\mathcal{S}.
\end{equation}

\end{Prop}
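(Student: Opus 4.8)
The plan is to establish the equivalence by passing through an intermediate description of module morphisms $\g \otimes \mathcal{S} \to (\g \otimes \mathcal{S})^\ast$ in terms of "$\mathcal{S}$-parametrized families of maps into the centroid". The key observation is that since $B$ is an invariant metric on $\g$, the map $B^\flat$ identifies $\g$ with $\g^\ast$ as $\g$-modules, and more generally $(\g \otimes \mathcal{S})^\ast$ can be analyzed using a basis $\{s_i\}$ of $\mathcal{S}$: an element of $(\g \otimes \mathcal{S})^\ast$ is determined by its restrictions to the subspaces $\g \otimes s_i$, each of which is identified with $\g^\ast \cong \g$ via $B^\flat$. Thus a linear map $\Psi:\g \otimes \mathcal{S} \to (\g \otimes \mathcal{S})^\ast$ corresponds to a collection of linear maps $\g \otimes \mathcal{S} \to \g^\ast$, equivalently (using the $B$-identification and multilinearity) to a bilinear map $\mathcal{S} \times \mathcal{S} \to \End_\F(\g)$, say $(s,s') \mapsto \alpha(s,s')$, via the formula $\Psi(x \otimes s)(y \otimes t) = B(\alpha(s,t)(x), y)$ (symmetry or not is not needed for being a morphism, though one may later symmetrize).

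The heart of the proof is then to translate the condition "$\Psi$ is a morphism of $\g \otimes \mathcal{S}$-modules" into a condition on $\alpha$. I would compute directly: $\Psi$ is a morphism iff $\Psi([x \otimes s, z \otimes u]) = \ad^\ast(z \otimes u)(\Psi(x \otimes s))$ for all $x, z \in \g$ and $s, u \in \mathcal{S}$. Evaluating both sides on an arbitrary $y \otimes t$ and using invariance of $B$ and the current bracket, the left side becomes $B(\alpha(su, t)([x,z]), y)$ while the right side becomes $-B(\alpha(s, ut)(x), [z,y]) = B([z, \alpha(s,ut)(x)], y) = B(\ad(z)\alpha(s,ut)(x), y)$ after moving $\ad(z)$ across $B$. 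Comparing, the morphism condition is equivalent to
\begin{equation*}
\alpha(su, t)\circ \ad(z) = \ad(z) \circ \alpha(s, ut) \quad \text{for all } z \in \g,\ s,t,u \in \mathcal{S}.
\end{equation*}
Setting $u = 1$ first gives $\alpha(s,t)\circ \ad(z) = \ad(z)\circ\alpha(s,t)$, i.e. $\alpha(s,t) \in \Gamma(\g)$ — this recovers that $\alpha$ takes values in the centroid. Feeding this back, the condition becomes $\alpha(su,t)\circ\ad(z) = \alpha(s,ut)\circ\ad(z)$ for all $z$, i.e. $(\alpha(su,t) - \alpha(s,ut))$ annihilates $[\g,\g]$. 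Taking $t = 1$ (and relabeling) and also exploiting the unit, one distills this to the single relation $\alpha(s,s')|_{[\g,\g]} = \alpha(ss',1)|_{[\g,\g]}$, which is exactly \eqref{iii-3}; conversely that relation together with centroid-valuedness reconstructs the general identity since $\alpha(su,t)$ and $\alpha(s,ut)$ both agree with $\alpha(sut,1)$ on $[\g,\g]$.

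The main obstacle I anticipate is bookkeeping rather than conceptual: one must be careful that the correspondence $\Psi \leftrightarrow \alpha$ is genuinely bijective and natural (independent of the chosen basis of $\mathcal{S}$), and that passing between "$\alpha(s,t)$ commutes with all $\ad(z)$" and "$\alpha(s,t) \in \Gamma(\g)$" is valid — here one uses that the centroid is precisely the commutant of the adjoint action, so no subtlety arises, but the definition $T \circ \ad(x) = \ad(T(x))$ must be reconciled with plain commutation $T\circ\ad(x) = \ad(x)\circ T$, which holds for any centroid since $\ad(T(x)) = \ad(x)\circ T$ is not automatic — actually for Lie algebras $T\circ\ad(x)=\ad(T(x))$ and $T\circ\ad(x)=\ad(x)\circ T$ are equivalent when $T$ is a centroid, and I would note this equivalence explicitly. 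The other place requiring care is the reduction from the three-parameter identity to the two-parameter identity \eqref{iii-3}: one direction is immediate by specialization, and the reverse uses commutativity and associativity of $\mathcal{S}$ together with the fact that both sides, when restricted to $[\g,\g]$, are forced to equal $\alpha(sut,1)$. I would organize the argument so that (i)$\Rightarrow$(ii) constructs $\alpha$ from $\Psi$ and verifies \eqref{iii-3}, and (ii)$\Rightarrow$(i) defines $\Psi$ from $\alpha$ by the displayed formula and checks the morphism property, using \eqref{iii-3} exactly once.
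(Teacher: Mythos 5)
Your proposal is correct and follows essentially the same route as the paper: encode $\Psi$ as a bilinear map $\alpha:\mathcal{S}\times\mathcal{S}\to\End_{\F}(\g)$ via $B$, translate the module-morphism condition into the three-parameter identity, specialize $u=1$ to land in $\Gamma(\g)$ and obtain \eqref{iii-3}, and reverse the specialization using the unit together with associativity and commutativity of $\mathcal{S}$ (the paper packages the correspondence through the maps $\mathcal{F}(s)=B^{\sharp}\circ\iota_s^{\ast}\circ\Psi^{\ast}$, which also sidesteps your basis-dependence worry). Your side remark is also right: the two formulations $T\circ\ad(x)=\ad(T(x))$ and $T\circ\ad(x)=\ad(x)\circ T$ are equivalent for any linear $T$ by skew-symmetry of the bracket, so no gap arises there.
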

\begin{proof}
\textbf{(i) $\Rightarrow$ (ii) } Suppose that $\Psi:\g \otimes \mathcal{S} \to (\g \otimes \mathcal{S})^{\ast}$ is a morphism of $\g \otimes \mathcal{S}$-modules. For each $s$ in $\mathcal{S}$, let 
$$
\mathcal{F}(s)=B^{\sharp} \circ \iota_s^{\ast} \circ \Psi^{\ast}:\g \otimes \mathcal{S} \to \g.
$$
Then, $B^{\flat} \circ \mathcal{F}(s)=\iota_s^{\ast} \circ \Psi^{\ast}$ and $\mathcal{F}(s)^{\ast} \circ B^{\flat}=\Psi \circ \iota_s$; thus:
\begin{equation}\label{nueva referencia 1}
B(x,\mathcal{F}(s)(y \otimes s^{\prime}))\!=\!\Psi(x \otimes s)(y \otimes s^{\prime}),\text{ for all }x,y \in \g, \text{ and }s,s^{\prime} \in \mathcal{S}.
\end{equation} 
We claim that:
\begin{equation}\label{equivariancia S-generalizada}
\mathcal{F}(s)\left([x \otimes s^{\prime},y \otimes t]\right)=[\mathcal{F}(s t)(x \otimes s^{\prime}),y],
\end{equation}
for all $x,y$ in $\g$ and $s,s^{\prime},t$ in $\mathcal{S}$. Indeed, take $x,y,z$ in $\g$ and $s,s^{\prime},t$ in $\mathcal{S}$. From \eqref{nueva referencia 1} and using that $B$ is invariant we get:
\begin{equation}\label{nueva referencia 2}
\begin{split}
\Psi([z,x] \otimes s s^{\prime})(y \otimes t)&=B(z,[x,\mathcal{F}(s s^{\prime})(y \otimes t)]),\text{ and }\\
\Psi(z \otimes s)([x \otimes s^{\prime},y \otimes t])&=B(z,\mathcal{F}(s)([x \otimes s^{\prime},y \otimes t]))
\end{split}
\end{equation}
Due to $\Psi$ is a morphism of $\g \otimes \mathcal{S}$-modules, then $\Psi([z,x] \otimes s s^{\prime})(y \otimes t)=\Psi(z \otimes s)([x \otimes s^{\prime},y \otimes t])$. Thus, from \eqref{nueva referencia 2}, it follows:
\begin{equation}\label{fe2}
B(z,[x,\mathcal{F}(s s^{\prime})(y \otimes t)])=B(z,\mathcal{F}(s)([x \otimes s^{\prime},y \otimes t])).
\end{equation}
Using that $B$ is non-degenerate and that the Lie bracket $[\,\cdot\,,\,\cdot\,]$ is skew-symmetric, from \eqref{fe2} we obtain:
\begin{equation}\label{uno prima}
\mathcal{F}(s)([x \otimes s^{\prime},y \otimes t])=[x,\mathcal{F}(ss^{\prime})(y \otimes t)]=[\mathcal{F}(s t)(x \otimes s^{\prime}),y],
\end{equation}
which proves \eqref{equivariancia S-generalizada}. Using \eqref{equivariancia S-generalizada} or \eqref{uno prima}, it follows:
\begin{equation}\label{centroides y F}
\begin{split}
\mathcal{F}(s)\circ \iota_{s^{\prime}}\left([x,y]\right)&=\mathcal{F}(s)([x,y] \otimes s^{\prime})=\mathcal{F}(s)([x \otimes s^{\prime},y \otimes 1])\\
\,&=[\mathcal{F}(s) \circ \iota_{s^{\prime}}(x),y]=[x,\mathcal{F}(ss^{\prime})(y \otimes 1)].
\end{split}
\end{equation}
From \eqref{centroides y F} we deduce that $\mathcal{F}(s)\circ \iota_{s^{\prime}}$ belongs to $\Gamma(\g)$. Let $\alpha:\mathcal{S}\times \mathcal{S}\to \Gamma(\g)$ be defined by:
\begin{equation}\label{definicion de alpha}
\alpha(s,s^{\prime})=\mathcal{F}(s)\circ \iota_{s^{\prime}},\quad \text{ for all }s,s^{\prime} \in \mathcal{S}.
\end{equation}
Now we shall prove \eqref{iii-3}. Indeed, from \eqref{centroides y F} and \eqref{definicion de alpha}, we get:
$$
\aligned
\alpha(s,s^{\prime})([x,y])&=\mathcal{F}(s) \circ \iota_{s^{\prime}}([x,y])=[x,\mathcal{F}(ss^{\prime})(y \otimes 1)]\\
\,&=[x,\alpha(ss^{\prime},1)(y)]=\alpha(ss^{\prime},1)([x,y]).
\endaligned
$$
then \eqref{iii-3} holds. By the commutativity of $\mathcal{S}$, then it follows:
\begin{equation}\label{iii-3-2}
\alpha(s,s^{\prime})(x)=\alpha(ss^{\prime},1)(x)=\alpha(s^{\prime},s)(x),\,\text{ for all }x \in [\g,\g]
\end{equation}
Further, take $x$ in $[\g,\g]$, using that $\mathcal{S}$ is associative and \eqref{iii-3-2}, we obtain:
\begin{equation}\label{asociatividad de alpha}
\alpha(s,s^{\prime}t)(x)=\alpha(s(s^{\prime}t),1)(x)=\alpha((ss^{\prime})t,1)(x)=\alpha(ss^{\prime},t)(x).
\end{equation} 
\smallskip

\textbf{(ii) $\Rightarrow$ (i)} Take $s$ in $\mathcal{S}$. The bilinear map $(x,s^{\prime}) \mapsto \alpha(s,s^{\prime})(x)$, induces the linear map $\mathcal{F}(s):\g \otimes \mathcal{S} \to \g, \,x \otimes s^{\prime}\mapsto \alpha(s,s^{\prime})(x)$, for all $x$ in $\g$ and $s^{\prime}$ in $\mathcal{S}$. Using that $\mathcal{S}$ is associative and \eqref{asociatividad de alpha}, we get:
\begin{equation}\label{equivariancia S-generalizada 2}
\begin{split}
\mathcal{F}(s)([x \otimes s^{\prime},y \otimes t])&=\mathcal{F}(s)\circ \iota_{s^{\prime}t}([x,y])=\alpha(s,s^{\prime}t)([x,y])\\
\,&=\alpha(st,s^{\prime})([x,y])=[\alpha(st,s^{\prime})(x),y]\\
\,&=[\mathcal{F}(st)\circ \iota_{s^{\prime}}(x),y]=[\mathcal{F}(st)(x \otimes s^{\prime}),y],
\end{split}
\end{equation}
for all $x,y$ in $\g$ and for all $s,s^{\prime},t$ in $\mathcal{S}$. This proves \eqref{equivariancia S-generalizada} and \eqref{uno prima}. We define $\Psi:\g \otimes \mathcal{S} \to \left(\g \otimes \mathcal{S}\right)^{\ast}$ by:
\begin{equation}\label{fe33}
\Psi(x \otimes s)=B^{\flat}(x) \circ \mathcal{F}(s),\text{ for all }x \in \g \text{ and } s \in \mathcal{S}.
\end{equation}
We shall prove that $\Psi$ is a morphism of $\g \otimes \mathcal{S}$-modules. Using that $B$ is invariant and \eqref{uno prima}, we obtain:
\begin{equation}\label{fe3}
\begin{split}
& \Psi([x \otimes s,y \otimes s^{\prime}])(z \otimes t)=\Psi([x,y] \otimes s s^{\prime})(z \otimes t)\\
&=B^{\flat}([x,y])\circ \mathcal{F}(ss^{\prime})(z \otimes t)=B([x,y],\mathcal{F}(s s^{\prime})(z \otimes t))\\
\,&=B(x,[y,\mathcal{F}(s s^{\prime})(z \otimes t)])=B(x,\mathcal{F}(s)[y \otimes s^{\prime},z \otimes t])\\
\,&=B^{\flat}(s)\circ \mathcal{F}(s)([y \otimes s^{\prime},z \otimes t])=\Psi(x \otimes s)([y \otimes s^{\prime},z \otimes t]).
\end{split}
\end{equation}
Then, $\Psi$ is a morphism of $\g \otimes \mathcal{S}$-modules.
\end{proof}

We have stated conditions for there to be a morphism of $\g \otimes \mathcal{S}$-modules $\Psi:\g \otimes \mathcal{S} \to (\g \otimes \mathcal{S})^{\ast}$. In the next result, we will state conditions so that $\Psi$ yields an invariant metric on $\g \otimes \mathcal{S}$. 

\begin{Theorem}\label{corolario caracterizacion current}{\sl
Let $\g$ be a quadratic Lie algebra with invariant metric $B$, and let $S$ be an associative and commutative algebra with unit over $\F$. The current Lie algebra $\g \otimes \mathcal{S}$ admits an invariant metric if and only if there exists a bilinear map $\alpha:\mathcal{S} \times \mathcal{S} \to \Gamma(\g)$ satisfying:
\smallskip

\textbf{(i)} $\alpha(s,t)([\g,\g])=\alpha(st,1)([\g,\g])$ for all $s,t$ in $\mathcal{S}$.
\medskip

\textbf{(ii)} $\bigcap_{s \in \mathcal{S}}\Ker\left(\mathcal{F}(s)\right)=\{0\}$, where $\mathcal{F}(s):\g \otimes \mathcal{S} \to \g$, is defined by: $\mathcal{F}(s)(x \otimes t)=\alpha(s,t)(x)$, for all $x$ in $\g$ and $s,t$ in $\mathcal{S}$.
\medskip

\textbf{(iii)} $\alpha(s,t)^{\ast}=\alpha(t,s)$, where $B(\alpha(s,t)(x),y)=B(x,\alpha(s,t)^{\ast}(y))$, for all $x,y$ in $\g$ and $s,t$ in $\mathcal{S}$.}

\end{Theorem}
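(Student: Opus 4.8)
The plan is to use Proposition \ref{teorema caracterizacion current} as the backbone, and then isolate exactly what extra data turns the morphism $\Psi$ into an invariant metric: namely, that $\Psi$ is a \emph{bijective} and \emph{symmetric} morphism of modules. Recall that a symmetric, non-degenerate, invariant bilinear form $\widetilde B$ on $\g\otimes\mathcal S$ corresponds precisely to a module isomorphism $\widetilde B^{\flat}:\g\otimes\mathcal S\to(\g\otimes\mathcal S)^{\ast}$ satisfying the symmetry condition $\widetilde B^{\flat}(u)(v)=\widetilde B^{\flat}(v)(u)$. So the theorem amounts to: a morphism $\Psi$ exists that is additionally bijective and symmetric if and only if (i)--(iii) hold. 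Condition (i) is just condition (\ref{iii-3}) of the Proposition, so the existence of \emph{some} morphism $\Psi$ attached to $\alpha$ is already handled; what remains is to translate bijectivity and symmetry of $\Psi$ into (ii) and (iii) respectively.

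The key computations proceed as follows. First I would recall from the proof of the Proposition that, given $\alpha$ satisfying (i), the associated morphism is $\Psi(x\otimes s)=B^{\flat}(x)\circ\mathcal F(s)$, equivalently
\begin{equation*}
\Psi(x\otimes s)(y\otimes t)=B\bigl(x,\mathcal F(s)(y\otimes t)\bigr)=B\bigl(x,\alpha(s,t)(y)\bigr).
\end{equation*}
For \textbf{injectivity/bijectivity}: since $B^{\flat}$ is an isomorphism, $\Psi(x\otimes s)=0$ for all $x$ iff $\mathcal F(s)=0$ on the relevant arguments; more precisely, I would show $\bigcap_{s}\Ker\Psi^{\ast}$ (or the analogous kernel computation) reduces to $\bigcap_{s\in\mathcal S}\Ker\mathcal F(s)$. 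The point is that $\Psi$ fails to be injective exactly when there is $u=\sum x_i\otimes s_i$ with $\Psi(u)=0$, i.e. $B(x,\mathcal F(s)(u))=0$ for all $x\in\g$, $s\in\mathcal S$; by non-degeneracy of $B$ this says $u\in\bigcap_s\Ker\mathcal F(s)$. So $\Psi$ is injective (hence, by finite-dimensionality, bijective) iff (ii) holds. For \textbf{symmetry}: $\Psi(x\otimes s)(y\otimes t)=\Psi(y\otimes t)(x\otimes s)$ for all $x,y,s,t$ unwinds to $B(x,\alpha(s,t)(y))=B(y,\alpha(t,s)(x))=B(\alpha(t,s)(x),y)$; using the adjoint $\alpha(s,t)^{\ast}$ defined in (iii), the left side is $B(\alpha(s,t)^{\ast}(x)?\ldots)$—one has to be careful: $B(x,\alpha(s,t)(y))=B(\alpha(s,t)^{\ast}(x),y)$ is \emph{not} the defining relation; rather the defining relation in (iii) is $B(\alpha(s,t)(x),y)=B(x,\alpha(s,t)^{\ast}(y))$. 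So $B(x,\alpha(s,t)(y))=B(\alpha(s,t)(y),x)=B(y,\alpha(s,t)^{\ast}(x))$, and symmetry of $\Psi$ becomes $B(y,\alpha(s,t)^{\ast}(x))=B(\alpha(t,s)(x),y)=B(y,\alpha(t,s)(x))$ for all $x,y$, which by non-degeneracy is exactly $\alpha(s,t)^{\ast}=\alpha(t,s)$, i.e. (iii).

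Putting these together: ($\Rightarrow$) if $\g\otimes\mathcal S$ admits an invariant metric $\widetilde B$, then $\widetilde B^{\flat}$ is in particular a module morphism, so by Proposition \ref{teorema caracterizacion current} we get $\alpha$ with (i); setting $\mathcal F(s)=B^{\sharp}\circ\iota_s^{\ast}\circ(\widetilde B^{\flat})^{\ast}$ as in that proof, bijectivity of $\widetilde B^{\flat}$ forces (ii) and symmetry of $\widetilde B$ forces (iii), by the reductions above. ($\Leftarrow$) conversely, given $\alpha$ with (i)--(iii), build $\Psi$ as in the Proposition; it is a module morphism by (i), bijective by (ii), and symmetric by (iii), hence $\widetilde B(u,v):=\Psi(u)(v)$ is a non-degenerate symmetric invariant bilinear form on $\g\otimes\mathcal S$. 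The main obstacle I anticipate is bookkeeping around condition (iii): one must verify that $\alpha(s,t)^{\ast}$ is well-defined as an element of $\Gamma(\g)$ (the adjoint of a centroid with respect to an invariant metric is again a centroid—this should follow from invariance of $B$ and the centroid relation), and one must keep the order of arguments in $B$ and in $\alpha(s,t)$ versus $\alpha(t,s)$ straight, since $B$ is symmetric but $\alpha$ need not be. A secondary subtlety is checking that the $\mathcal F(s)$ reconstructed from $\widetilde B^{\flat}$ in the forward direction indeed coincides with the one built from $\alpha$, so that (ii) as stated (in terms of the $\mathcal F(s)$ of $\alpha$) is the correct translation of bijectivity; this is exactly the identity $B^{\flat}\circ\mathcal F(s)=\iota_s^{\ast}\circ\Psi^{\ast}$ already recorded in the Proposition's proof.
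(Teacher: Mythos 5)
Your proposal is correct and follows essentially the same route as the paper: build $\Psi(x\otimes s)=B^{\flat}(x)\circ\mathcal F(s)$ from Proposition \ref{teorema caracterizacion current}, obtain invariance from $\Psi$ being a module morphism, non-degeneracy from \textbf{(ii)} via the identity $\bar B(u,y\otimes t)=B(\mathcal F(t)(u),y)$, and symmetry from \textbf{(iii)}. The only slight imprecision is in the injectivity step, where ``$\Psi(u)=0$'' should be the transpose condition ``$\Psi(v)(u)=0$ for all $v$'' (equivalent in finite dimensions, and this first-slot computation is exactly what the paper performs); you already flag this, so it is not a gap.
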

\begin{proof} $(\Leftarrow)$ Suppose that $\alpha$ satisfies \textbf{(i)}, \textbf{(ii)} and \textbf{(iii)}, and we will prove the existence of an invariant metric $\bar{B}$ on $\g \otimes \mathcal{S}$. Let $x,y$  be in $\g$ and $s,t$  be in $\mathcal{S}$. The bilinear map $(x,s) \mapsto B^{\flat}(x)\circ \mathcal{F}(s)$, induces the linear map $\Psi:\g \otimes \mathcal{S} \to \left(\g \otimes \mathcal{S} \right)^{\ast}$, $x \otimes s \mapsto B^{\flat}(x)\circ \mathcal{F}(s)$ (note that this is the same as the one defined in \eqref{fe33}). By \eqref{fe3} and \textbf{Prop. \ref{teorema caracterizacion current}.(i)}, $\Psi$ is a morphism of $\g \otimes \mathcal{S}$-modules. We define the bilinear form $\bar{B}$ on $\g \otimes \mathcal{S}$ by $\bar{B}(x \otimes s,y \otimes t)=B(\alpha(t,s)(x),y)$, then:
\begin{equation}\label{metrica en current Lie algebra}
\begin{split}
\bar{B}(x \otimes s,y \otimes t)&=B(\alpha(t,s)(x),y)=B\left(\mathcal{F}(t)(x \otimes s),y\right)\\
\,&=\Psi(y \otimes t)(x \otimes s).
\end{split}
\end{equation}
We claim that $\bar{B}$ is an invariant metric on $\g \otimes \mathcal{S}$. Indeed, let $x^{\prime}$ be in $\g \otimes \mathcal{S}$ such that $\bar{B}^{\flat}(x^{\prime})=0$, then $\bar{B}(x^{\prime},y \otimes s)=0$. From \eqref{metrica en current Lie algebra}, this amounts to say that $B(\mathcal{F}(s)(x^{\prime}),y)=0$. Since $B$ is non-degenerate, then $\mathcal{F}(s)(x^{\prime})=0$ which means that $x^{\prime}$ lies in $\Ker(\mathcal{F}(s))$, where $s$ is arbitrary. Then, by \textbf{(ii)}, $x^{\prime}=0$ and $\bar{B}$ is non-degenerate. In \eqref{fe3} we proved there that $\Psi$ is a morphism of $\g \otimes \mathcal{S}$-modules, then $\bar{B}$ is invariant. From $\alpha(s,t)^{\ast}=\alpha(t,s)$ and due to $B$ is symmetric, it follows:
$$
\aligned
\bar{B}(x \otimes s,y \otimes t)&=B(\mathcal{F}(t)(x \otimes s),y)=B(\alpha(t,s)(x),y)\\
\,&=B(x,\alpha(t,s)^{\ast}(y))=B(x,\alpha(s,t)(y))\\
\,&=B(\mathcal{F}(s)(y \otimes t),x)=\bar{B}(y \otimes t,x \otimes s),
\endaligned
$$
then $\bar{B}$ is symmetric and it is an invariant metric on $\g \otimes \mathcal{S}$.
\smallskip

$(\Rightarrow)$ Let $\bar{B}$ be the invariant metric on $\g \otimes \mathcal{S}$ and let $\Psi=\bar{B}^{\flat}$. Then $\Psi$ is an isomorphism of $\g \otimes \mathcal{S}$-modules, which proves \textbf{(i)} of \textbf{Prop. \ref{teorema caracterizacion current}}. 
\smallskip

For each $s \in \mathcal{S}$, let $\mathcal{F}(s):\g \otimes \mathcal{S} \to \g$ be the linear map defined by:
\begin{equation}\label{definicion de F(s)}
\mathcal{F}(s)=B^{\sharp} \circ \iota_s^{*} \circ \bar{B}^{\flat},\,\text{ then }\, B^{\flat} \circ \mathcal{F}(s)=\iota_s^{\ast} \circ \bar{B}^{\flat}.
\end{equation}
By \eqref{equivariancia S-generalizada} or \eqref{uno prima}, it follows $\mathcal{F}(s)([x \otimes s^{\prime},y \otimes t])=[\mathcal{F}(st)(x \otimes s^{\prime}),y]$, for all $x,y$ in $\g$ and $s,s^{\prime},t$ in $\mathcal{S}$. Let $\alpha(s,t):\g \to \g$ be the map defined by $\alpha(s,t)(x)=\mathcal{F}(s)(x \otimes t)$. By \eqref{centroides y F}, we have that $\alpha(s,t)$ belongs to $\Gamma(\g)$. We shall prove that $\alpha(s,t)^{\ast}=\alpha(t,s)$. Using \eqref{definicion de F(s)} we get:
\begin{equation}\label{transpuesta}
\begin{split}
B(\alpha(t,s)(x),y)&=B(\mathcal{F}(t)(x \otimes s),y)=B^{\flat}\circ \mathcal{F}(t)(x \otimes s)(y)\\
\,&=\iota^{\ast} \circ \bar{B}^{\flat}(x \otimes s)(y)=\bar{B}(x \otimes s,y \otimes t).
\end{split}
\end{equation}
Using the same arguments as in \eqref{transpuesta} and that $\bar{B}$ is symmetric, we obtain:
\begin{equation}\label{transpuesta 2}
B(\alpha(s,t)(y),x)\!=\!\bar{B}(y \otimes t,x \otimes s)\!=\!\bar{B}(x \otimes s,y \otimes t)\!=\!B(\alpha(t,s)(x),y)
\end{equation}
Since $B$ is non-degenerate, from \textbf{(iii)} and \eqref{transpuesta 2}, we get $\alpha(s,t)^{\ast}=\alpha(t,s)$. 
\smallskip

It remains to show that $\underset{s \in \mathcal{S}}{\bigcap}\Ker\left(\mathcal{F}(s)\right)=\{0\}$. Let $u$ be in $\g \otimes \mathcal{S}$ such that $\mathcal{F}(s)(u)=0$ for all $s$ in $\mathcal{S}$. By \eqref{definicion de F(s)}, $\bar{B}(v \otimes s,u)=B\left(v,\mathcal{F}(s)(u)\right)=0$ for all $v$ in $\g$ and $s$ in $\mathcal{S}$. Since $\bar{B}$ is non-degenerate, then $u=0$.
\end{proof}

The following result proves that the existence of an invariant metric on $\g \otimes \mathcal{S}$, forces $\mathcal{S}$ to admit a non-degenerate, symmetric, and invariant bilinear form, if $\g$ is a non-Abelian and indecomposable quadratic Lie algebra, over an algebraically closed field of zero characteristic.

\begin{Cor}\label{corolario current}{\sl
Let $\g$ be a non-Abelian and indecomposable quadratic Lie algebra with invariant metric $B$, over an algebraically closed field $\F$ of zero characteristic. Let $\mathcal{S}$ be a commutative and associative algebra with unit over $\F$. Then $\g \otimes \mathcal{S}$ admits an invariant metric if and only if $\mathcal{S}$ admits a non-degenerate, symmetric and invariant bilinear form.}
\end{Cor}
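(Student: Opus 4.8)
The plan is to derive both directions from Theorem~\ref{corolario caracterizacion current}, using as the only extra ingredient the structure of the centroid of an indecomposable quadratic Lie algebra. Since $\g$ is non-Abelian, indecomposable and quadratic over an algebraically closed field of characteristic zero, $\Gamma(\g)$ is a local $\F$-algebra, $\Gamma(\g)=\F\,\Id_{\g}\oplus\mathcal N$, whose maximal ideal $\mathcal N$ is exactly the set of nilpotent centroids; I write $\pi\colon\Gamma(\g)\to\F$ for the augmentation sending $T$ to its scalar part, and note $\pi(T^{\ast})=\pi(T)$, since transposition relative to $B$ fixes $\Id_{\g}$ and carries nilpotent centroids to nilpotent centroids. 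Two further consequences of indecomposability that I will use: $C(\g)\subseteq[\g,\g]$ (otherwise the non-degenerate part of $C(\g)$ would split $\g$ off as an orthogonal central ideal), and therefore, if a centroid $T$ kills $[\g,\g]$, then $\Im(T)\subseteq C(\g)\subseteq[\g,\g]\subseteq\Ker(T)$, so $T^{2}=0$ and $T\in\mathcal N$.

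For $(\Leftarrow)$, from a non-degenerate symmetric invariant bilinear form $\varphi$ on $\mathcal S$ I would form $\bar B:=B\otimes\varphi$ on $\g\otimes\mathcal S$, that is $\bar B(x\otimes s,y\otimes t)=B(x,y)\,\varphi(s,t)$; it is symmetric and non-degenerate because $B$ and $\varphi$ are, and $\g\otimes\mathcal S$-invariant because $B$ and $\varphi$ are invariant and $\mathcal S$ is commutative and associative. (Equivalently, one feeds $\alpha(s,t):=\varphi(s,t)\,\Id_{\g}$ into Theorem~\ref{corolario caracterizacion current}.)

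For $(\Rightarrow)$, take $\alpha\colon\mathcal S\times\mathcal S\to\Gamma(\g)$ as in Theorem~\ref{corolario caracterizacion current} and put $\varphi(s,t):=\pi\bigl(\alpha(s,t)\bigr)$. Symmetry of $\varphi$ follows from condition (iii) and $\pi\circ\ast=\pi$. For invariance: \eqref{asociatividad de alpha} shows that $\alpha(r,st)-\alpha(rs,t)$ vanishes on $[\g,\g]$, hence lies in $\mathcal N$ and is annihilated by $\pi$, so $\varphi(rs,t)=\varphi(r,st)$; the same reasoning applied to $\alpha(s,t)-\alpha(st,1)$ gives $\varphi(s,t)=\varphi_{0}(st)$ with $\varphi_{0}:=\varphi(\,\cdot\,,1)$, so the radical of $\varphi$ equals the ideal $\{w\in\mathcal S:\varphi_{0}(\mathcal S w)=0\}$. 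It remains to see that $\varphi$ is non-degenerate, and here condition (ii) enters. Since $\mathcal N$ preserves $C(\g)$ and is a nilpotent algebra, the chain $C(\g)\supseteq\mathcal N C(\g)\supseteq\mathcal N^{2}C(\g)\supseteq\cdots$ reaches $\{0\}$; pick a nonzero $z_{0}$ in its last nonzero term (when $C(\g)=\{0\}$, $\g$ is simple, $\Gamma(\g)=\F\,\Id_{\g}$, and any nonzero $z_{0}$ works). Then $\mathcal N z_{0}=\{0\}$, so $T z_{0}=\pi(T)\,z_{0}$ for every $T\in\Gamma(\g)$; hence, for $w$ in the radical of $\varphi$ and any $s\in\mathcal S$, $\mathcal F(s)(z_{0}\otimes w)=\alpha(s,w)(z_{0})=\pi(\alpha(s,w))\,z_{0}=\varphi_{0}(sw)\,z_{0}=0$, so that $z_{0}\otimes w\in\bigcap_{s\in\mathcal S}\Ker\mathcal F(s)=\{0\}$ by (ii). Thus $w=0$, $\varphi$ is non-degenerate, and it is the sought form on $\mathcal S$.

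The step I expect to be the main obstacle is the structural input that $\Gamma(\g)$ is local with nilpotent maximal ideal: this is where indecomposability together with the non-Abelian and algebraically-closed hypotheses genuinely intervene. The point is that a nontrivial idempotent $e\in\Gamma(\g)$ must be converted into an orthogonal decomposition of $\g$ into ideals; this is immediate when $e$ is $B$-self-adjoint, but in general one first replaces $e$ by a self-adjoint idempotent built from $e$ and $e^{\ast}$, and must separately rule out the remaining possibility in which $\g$ would decompose as a direct product of two totally isotropic ideals. Everything after that is bookkeeping with the identities already established in the proof of Proposition~\ref{teorema caracterizacion current}.
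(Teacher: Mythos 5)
Your proposal follows the same overall strategy as the paper: the $(\Leftarrow)$ direction is identical (the tensor-product metric $B\otimes\varphi$), and for $(\Rightarrow)$ both you and the paper extract the ``scalar part'' of $\alpha(s,t)$ to define the form on $\mathcal S$, prove invariance by observing that a centroid vanishing on $[\g,\g]$ has no scalar part, and prove non-degeneracy by producing a vector annihilated by all the relevant nilpotent centroids and invoking condition \textbf{(ii)} of Thm.~\ref{corolario caracterizacion current}. Your non-degeneracy argument (a single $z_0$ with $\mathcal N z_0=\{0\}$, obtained from the chain $\mathcal N^k C(\g)$) is a clean variant of the paper's appeal to the common kernel of a commuting family of nilpotent maps, and it has the minor advantage of not needing the test vector to lie in $[\g,\g]$.

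The one genuine divergence is also where your proof has a gap. You route everything through the claim that the full centroid $\Gamma(\g)$ is local with maximal ideal the set of nilpotent centroids, so that $\pi$ is defined on arbitrary (not necessarily $B$-self-adjoint) centroids; you flag this as ``the main obstacle'' and only sketch it. The claim is in fact true under the hypotheses, but closing it requires more than replacing an idempotent $e$ by a self-adjoint one: one must show that a nontrivial idempotent centroid forces an \emph{orthogonal} decomposition into \emph{non-degenerate} ideals. (A complete argument: if $\g=I\oplus J$ with $[I,J]=0$, then $I\cap I^{\perp}\subseteq C(\g)\subseteq[\g,\g]=[I,I]+[J,J]$ with $[I,I]\subseteq I\cap J^{\perp}$ and $[J,J]\subseteq J\cap I^{\perp}$, whence $I\cap I^{\perp}\subseteq[I,I]\subseteq J^{\perp}$ and so $I\cap I^{\perp}\subseteq I^{\perp}\cap J^{\perp}=\g^{\perp}=\{0\}$; thus $I$ is non-degenerate and $\g=I\perp I^{\perp}$ contradicts indecomposability.) The paper avoids this entirely by first symmetrizing: $\bar\alpha(s,t)=\alpha(s,t)+\alpha(t,s)$ is a $B$-self-adjoint centroid by condition \textbf{(iii)}, its Jordan--Chevalley semisimple and nilpotent parts are again $B$-self-adjoint centroids, and the eigenspaces of a semisimple self-adjoint centroid are automatically mutually orthogonal non-degenerate ideals, so quadratic-indecomposability immediately forces the semisimple part to be a scalar. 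Since $\bar\alpha(s,t)=2\alpha(s,t)$ on $[\g,\g]$ (by \textbf{(i)} and commutativity of $\mathcal S$), nothing is lost by symmetrizing, and I would recommend adopting that shortcut rather than proving the stronger locality statement; with that substitution your argument is complete.
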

\begin{proof}
$(\Leftarrow)$ If $\mathcal{S}$ admits a non-degenerate, symmetric and invariant bilinear form $\gamma$, then the bilinear form $\bar{B}$ on $\g \otimes \mathcal{S}$ defined by:\newline $\bar{B}(x \otimes s,y \otimes s^{\prime})=B(x,y)\gamma(s,s^{\prime})$, for all $x,y$ in $\g$ and $s,s^{\prime}$ in $\mathcal{S}$, is an invariant metric on $\g \otimes \mathcal{S}$.
\smallskip

$(\Rightarrow)$ By Jordan-Chevalley decomposition (see \cite{Hum}, \textbf{Prop. 4.2}), the semisimple and nilpotent parts of any $B$-self adjoint centroid $T$ of $\g$, are also $B$-self adjoint centroids of $\g$. Since $\g$ is an indecomposable quadratic Lie algebra over an algebraically closed field $\mathbb{F}$ of zero characteristic, $T$ can be written as $T=\gamma \operatorname{Id}_{\g}+N$, where $\gamma$ is in $\F$, $\gamma \Id_{\g}$ is the semisimple part, and $N$ is the nilpotent part of $T$. This follows from the fact that each eigenspace corresponding to an eigenvalue of a semisimple $B$-self adjoint centroid of $\g$, is a non-degenerate ideal of $\g$.
\smallskip

Let $s$ and $t$ be in $\mathcal{S}$ and consider the bilinear map $\alpha\!:\!\mathcal{S}\! \!\times \!\mathcal{S} \!\to \! \Gamma(\g)$ of \textbf{Prop. \ref{teorema caracterizacion current}.(iii)}. As $\g \otimes \mathcal{S}$ admits an invariant metric, by \textbf{Thm. \ref{corolario caracterizacion current}.(iii)}, $\alpha(s,t)^{\ast}\!\!=\!\alpha(t,s)$, then $\bar{\alpha}(s,t)\!=\!\alpha(s,t)\!+\!\alpha(t,s)$ is a $B$-self adjoint centroid of $\g$ (where $\bar{\alpha}(s,t)\!\!=\!\bar{\alpha}(t,s)$). Thus, there are an scalar $\gamma(s,t)$ in $\F$ and a nilpotent $B$-self adjoint centroid $\sigma(s,t)$ of $\g$ such that:
\begin{equation}\label{expresion alfa}
\bar{\alpha}(s,t)=\gamma(s,t)\operatorname{Id}_{\g}+\sigma(s,t).
\end{equation}
We affirm that $\gamma$ is an invariant, symmetric and non-degenerate bilinear form in $\mathcal{S}$. To prove this, we will use that for any $T$ and $S$ in $\Gamma(\g)$, the following holds: $(T \circ S)\vert_{[\g,\g]}=(S \circ T)\vert_{[\g,\g]}$.
\smallskip 

Let $s,s^{\prime}$ and $t$ be in $\mathcal{S}$ and take a non-zero element $x$ in $[\g,\g]$. 
\smallskip

\textbf{Claim 1. $\gamma$ is bilinear.} From
$\bar{\alpha}(s+s^{\prime},t)=\gamma(s+s^{\prime},t)\Id_{\g}+\sigma(s+s^{\prime},t)$ and due to $\alpha$ is bilinear, we get:
\begin{equation}\label{oct18-2}
\begin{split}
& \left(\gamma(s+s^{\prime},t)-\gamma(s,t)-\gamma(s^{\prime},t)\right)x=\\
& \left(\sigma(s,t)+\sigma(s^{\prime},t)-\sigma(s+s^{\prime},t) \right)(x)
\end{split}
\end{equation}
Note that the linear map $\left(\sigma(s,t)+\sigma(s^{\prime},t)-\sigma(s+s^{\prime},t) \right)\vert_{[\g,\g]}$, is nilpotent, as $\sigma(s,t)\vert_{[\g,\g]}$, $\sigma(s^{\prime},s)\vert_{[\g,\g]}$ and $\sigma(s+s^{\prime},t)\vert_{[\g,\g]}$ are nilpotent and commute. Since $x$ is non zero, then, from \eqref{oct18-2} it follows: $\gamma(s+s^{\prime},t)-\gamma(s,t)-\gamma(s^{\prime},t)=0$, proving that $\gamma$ is bilinear. 
\smallskip

\textbf{Claim 2. $\gamma$ is invariant.} By \textbf{Thm. \ref{corolario caracterizacion current}.(i)}, $\alpha(s,s^{\prime}t)(x)=\alpha(s s^{\prime},t)(x)$ and by \eqref{expresion alfa} we get:
$$
\aligned
2\, \alpha(s,s^{\prime}t)(x)&=\gamma(s,s^{\prime}t)x+\sigma(s,s^{\prime}t)(x),\\
2\, \alpha(s s^{\prime},t)(x)&=\gamma(s s^{\prime},t)x+\sigma(s s^{\prime},t)(x).
\endaligned
$$
Then, $\left(\sigma(s,s^{\prime}t)-\sigma(s s^{\prime},t)\right)(x)=\left(\gamma(s s^{\prime},t)-\gamma(s,s^{\prime}t) \right)x$. Observe that $(\sigma(s,s^{\prime}t)-\sigma(s s^{\prime},t))\vert_{[\g,\g]}$ is a nilpotent linear map, then we proceed as in \textbf{Claim 1} to get that $\gamma(s,s^{\prime}t)=\gamma(s s^{\prime},t)$, thus $\gamma$ is invariant. Now we shall prove that $\gamma$ is symmetric. Since $\gamma$ is invariant and $\mathcal{S}$ is commutative, we have $\gamma(s,t)=\gamma(st,1)=\gamma(ts,1)=\gamma(t,s)$, for all $s,t$ in $\mathcal{S}$; then $\gamma$ is symmetric.
\smallskip

\textbf{Claim 3. $\gamma$ is non-degenerate.} We will use the following result: \emph{Let $V$ be a finite dimensional vector space and let $\mathcal{N}$ be a subspace of  $\End_{\F}(V)$ consisting of commutative and nilpotent maps. Then, there exists $w \neq 0$ in $V$ such that $T(w)=0$ for all $T$ in $\mathcal{N}$.} The proof of this result can be obtained by induction on $\dim_{\F}V$.
\smallskip

Suppose there exists $t^{\prime}$ in $\mathcal{S}$ such that $\gamma(s,t^{\prime})=0$ for all $s$ in $\mathcal{S}$. Let $w \neq 0$ be in $[\g,\g]$ such that $\sigma(s,t^{\prime})(w)=0$ for all $s$ in $\mathcal{S}$. By \eqref{expresion alfa} and \textbf{Prop. \ref{teorema caracterizacion current}.(ii)}, $\alpha(s,t^{\prime})(w)=0$. By \textbf{Thm. \ref{corolario caracterizacion current}} and \textbf{Thm. \ref{corolario caracterizacion current}.(ii)}, this amounts to say that $\mathcal{F}(s)(w \otimes t^{\prime})=\alpha(s,t^{\prime})(w)=0$. Thus, $w \otimes t^{\prime}$ lies in $\Ker(\mathcal{F}(s))$ for all $s \in \mathcal{S}$. Given that $\g \otimes \mathcal{S}$ admits an invariant metric, by \textbf{Thm. \ref{corolario caracterizacion current}.(ii)}, $w \otimes t^{\prime}=0$, thus $t^{\prime}=0$, and $\gamma$ is non-degenerate.
\end{proof}

\subsection{Heisenberg Lie algebra extended by a derivation}\label{Heisenberg Lie algebra}\label{Heisenberg Lie algebra extendida}

Let $\h_n=V_n \oplus \F \hslash$ be the $2n+1$-dimensional Heisenberg Lie algebra, with Lie bracket $[\,\cdot\,,\,\cdot\,]$ and $V_n$ a $2n$-dimensional vector space. It is known that there exists a non-degenerate and skew-symmetric bilinear form $\omega:V_n \times V_n \to \F$, such that $[x,y]=\omega(x,y)\hslash$, for all $x$ and $y$ in $V_n$ and $[\h_n,\F \hslash]=\{0\}$. The Heisenberg Lie algebra $\h_n$ does not admit an invariant metric. There is an extension of $\h_n$ by a derivation $D \in \Der(\h_n)$, which admits an invariant metric. The underlying vector space of this extension is $\F D \oplus \h_n$, and the Lie algebra structure is given by the semidirect product $\h_n(D)=\F D\ltimes \h_n$. Thereby, the Lie bracket $[\,\cdot,\,\cdot\,]_D$ in $\h_n(D)$ can be expressed as:
\begin{equation*}\label{corchete en Heisenberg extendido}
[a D+x+b \hslash,a^{\prime} D+x^{\prime}+b^{\prime}\hslash]_D=aD\left(x^{\prime}\right)-a^{\prime}D(x)+\omega\left(x,x^{\prime}\right)\hslash,
\end{equation*}
for all $a,a^{\prime},b,b^{\prime}$ in $\F$ and $x,x^{\prime}$ in $V_n$. In the following statements appear necessary and sufficient conditions for $\h_n(D)$ to admit an invariant metric (see \cite{Mac}):

\begin{itemize}

\item[(i)] $D(V_n) \subset V_n$ and $D(\hslash)=0$.
\smallskip

\item[(ii)] Let $\phi=D\vert_{V_n}:V_n \to V_n$. Then $\phi$ is invertible.
\smallskip

\item[(iii)] $\omega(\phi(x),y)=-\omega(x,\phi(y))$, for all $x$ and $y$ in $V_n$.
\smallskip

\item[(iv)] $C(\h_n(D))=\F \hslash$.

\end{itemize}

Any quadratic Lie algebra structure in $\h_n(D)$ is isometric to a quadratic Lie algebra structure with invariant metric $B$ on $\h_n(D)$ defined by: 
\begin{equation}\label{metrica en Heisenberg}
\begin{split}
x,y \in V_n\,\, \Rightarrow\,\, & B(x,y)=\omega\left(\phi^{-1}(x),y\right),\\
x \in V_n\,\, \Rightarrow\,\, & B(x,D)=B(x,\hslash)=0,\,\text{ and }\\
\,& B(D,D)=B(\hslash,\hslash)=0,\,\text{ and }\,B(D,\hslash)=1,\\
\end{split}
\end{equation}
(see \cite{Mac}). One can verify that $\Gamma\left(\h_n(D)\right)=\operatorname{Span}_{\F}\{\Id_{\h_n(D)},N\}$, where $N$ is the linear map in $\h_n(D)$ such that $N(D)=\hslash$ and $N^{2}=0$.
\smallskip

Let $\alpha:\mathcal{S} \times \mathcal{S} \to \Gamma(\h_n(D))$ be a bilinear map and let $\gamma,\xi:\mathcal{S}\times \mathcal{S}\to \F$ be the bilinear forms on $\mathcal{S}$ such that: 
\begin{equation}\label{alpha para Heisenberg}
\alpha(s,t)=\gamma(s,t)\Id_{\h_n(D)}+\xi(s,t)N,\,\text{ for all }s,t \in \mathcal{S}.
\end{equation}
We claim that $\alpha$ yields an invariant metric $\bar{B}$ in $\h_n(D) \otimes \mathcal{S}$, in the sense of \textbf{Thm. \ref{corolario caracterizacion current}}, if and only if $\gamma$ is non-degenerate, symmetric, and invariant form and $\xi$ is symmetric. The invariant metric $\bar{B}$ thus obtained is determined by the following assignments. Let $x,y$ be in $V_n$ and let $s,t$ be in $\mathcal{S}$, then:
\begin{equation}\label{metrica en current Heisenberg extendida}
\begin{split}
& \bar{B}(x \otimes s,y \otimes t)=B(x,y)\gamma(s,t),\\
& \bar{B}(D \otimes s,x \otimes t)=\bar{B}(\hslash \otimes s,\hslash \otimes t)=0,\\
& \bar{B}(D \otimes s,\hslash \otimes t)=\gamma(s,t),\,\text{ and }\,\bar{B}(D \otimes s,D \otimes t)=\xi(s,t).
\end{split}
\end{equation}
Indeed, suppose that $\alpha$ yields an invariant metric $\bar{B}$ on $\h_n(D) \otimes \mathcal{S}$, in the sense of \textbf{Thm. \ref{corolario caracterizacion current}}. Take a non-zero element $x$ in $\h_n$ and $s,s^{\prime},t$ in $\mathcal{S}$; since $\h_n=[\h_n(D),\h_n(D)]_D$, by \textbf{Thm. \ref{corolario caracterizacion current}.(i)}, $\alpha(ss^{\prime},t)(x)=\alpha(s,s^{\prime}t)(x)$, which, by \eqref{alpha para Heisenberg}, it is equivalent to $\gamma(s s^{\prime},t)x=\gamma(s,s^{\prime}t)x$, from where $\gamma(s s^{\prime},t)=\gamma(s,s^{\prime} t)$ and $\gamma$ is invariant. Using the same argument as in \textbf{Claim 2} of \textbf{Cor. \ref{corolario current}}, we get that $\gamma$ is symmetric.
\smallskip

Now we shall prove that $\xi$ is symmetric. Using \eqref{definicion de alpha}, \eqref{metrica en current Lie algebra}, \eqref{metrica en Heisenberg} and \eqref{alpha para Heisenberg}, we have:
\begin{equation}\label{X1}
\begin{split}
& \bar{B}(D \otimes s,D \otimes t)=B(\mathcal{F}(t)(D \otimes s),D)=B(\alpha(t,s)(D),D)\\
\,&=B(\gamma(t,s)D+\xi(t,s)N(D),D)=\xi(t,s)B(\hslash,D)=\xi(t,s).\\
\end{split}
\end{equation}
Using that $\bar{B}$ is symmetric in \eqref{X1}, we obtain that $\xi$ is symmetric.
Take $x,y$ in $V_n$ and $s,s^{\prime}$ in $\mathcal{S}$. Using the same argument as in \eqref{X1} and the fact that $N(x)=0$, we obtain: $\bar{B}(x \otimes s,y \otimes t)=B(x,y)\gamma(s,t)$.  By \eqref{metrica en Heisenberg}, we know that $B\vert_{V \times V}$ is non-degenerate, then $\gamma$ is non-degenerate. Using the same argument, we obtain the remaining cases of \eqref{metrica en current Heisenberg extendida}.
\smallskip

Now suppose that $\gamma$ is a non-degenerate, symmetric, and invariant bilinear form and that $\xi$ is symmetric and bilinear form in $\mathcal{S}$. Let $\alpha:\mathcal{S} \times \mathcal{S} \to \Gamma(\h_n(D))$ be the bilinear map defined by:
\begin{equation}\label{expresion para alpha en Heisenberg}
\alpha(s,t)=\gamma(s,t)\operatorname{Id}_{\h_n(D)}+\xi(s,t)N,\,\text{ for all }s,t \in \mathcal{S}.
\end{equation}
We shall verify that $\alpha$ satisfies the conditions given in \textbf{Thm. \ref{corolario caracterizacion current}}. Let $x$ be in $\h_n$ and $s$ and $t$ be in $\mathcal{S}$. Note that $[\h_n(D),\h_n(D)]_D=\h_n$ and $\Ker(N)=\h_n$, then $\alpha(s,t)(x)=\gamma(s,t)x$, and as $\gamma$ is invariant, then $\alpha$ satisfies \textbf{(i)} of \textbf{Thm. \ref{corolario caracterizacion current}}. 
\smallskip

To verify the conditions in \textbf{Thm. \ref{corolario caracterizacion current}.(ii)-(iii)}, first note that $\gamma$ is non-degenerate, then given a basis $\{s_1,\cdots,s_{\ell}\}$ of $\mathcal{S}$ ($\ell=\dim_{\F}(\mathcal{S})$), there exists a basis $\{s^{\prime}_1,\cdots,s^{\prime}_{\ell}\}$ of $\mathcal{S}$ such that $\gamma(s_i,s^{\prime}_j)=\delta_{ij}$, for all $1 \leq i,j \leq \ell$. Let $x^{\prime}=x_1 \otimes s_1+\ldots+x_{\ell}\otimes s_{\ell}$ be such that $\mathcal{F}(s)(x^{\prime})=0$ for all $s$ in $\mathcal{S}$; recall that $\mathcal{F}(s)(z \otimes t)=\alpha(s,t)(z)$ for all $z$ in $\g$ and $s,t$ in $\mathcal{S}$ (see \eqref{fe33}). From \eqref{expresion para alpha en Heisenberg} it follows:
\begin{equation}\label{fe5}
0=\mathcal{F}(s)(x^{\prime})=\sum_{i=1}^{\ell}\gamma(s,s_i)x_i+\sum_{i=1}^{\ell}\xi(s,s_i)N(x_i),\,\text{ for all }s \in \mathcal{S}.
\end{equation}
Letting $s=s^{\prime}_i$ in \eqref{fe5} and due to $\Im(N)=\F \hslash$, we get that for each $1 \leq i \leq \ell$, there exists $a_i$ in $\F$ such that $x_i=a_i \hslash$, whence $x^{\prime}=a_1 \hslash \otimes s_1+\ldots+a_{\ell}\hslash \otimes s_{\ell}$. Since $N(\hslash)=0$, from \eqref{fe5}, it follows $0=\mathcal{F}(s_j)(x^{\prime})=a_j \hslash$, for all $1 \leq j \leq \ell$. Thus $a_j=0$ for all $j$ and  $x^{\prime}=0$. Therefore, $\displaystyle{\bigcap_{s \in \mathcal{S}}}\Ker\left(\mathcal{F}(s)\right)=\{0\}$. Using that $\alpha$ is symmetric, we have $\alpha(s,t)^{\ast}=\alpha(t,s)$, for all $s$ and $t$ in $\mathcal{S}$. Thus, by \textbf{Thm. \ref{corolario caracterizacion current}}, $\alpha$ yields an invariant metric on $\h_n(D) \otimes \mathcal{S}$ which is determined by \eqref{metrica en current Heisenberg extendida}.

\section{A double extension theorem for quadratic current nilpotent Lie algebras}

In this section we will show a structure result for a current Lie algebra $\g \otimes \mathcal{S}$ equipped with an invariant metric, where $\g$ is an indecomposable, nilpotent and quadratic Lie algebra. To state this result, we will use Medina-Revoy theorem and the double extension for quadratic Lie algebras (see \cite{Med}). In addition, we will show how to extend the bilinear map $\alpha:\mathcal{S} \times \mathcal{S} \to \Gamma(\g)$ given in \textbf{Thm. \ref{corolario caracterizacion current}}, to another higher-dimensional quadratic Lie algebra, keeping the same conditions to obtain a higher-dimensional quadratic current Lie algebra. 
\smallskip

Let $\g \neq 0$ be an indecomposable, nilpotent and quadratic Lie algebra with invariant metric $B$, over an algebraically closed field $\F$ of zero characteristic. Let $\mathcal{A}$ be the subspace generated by those elements $x$ of $\g$ such that $N(x)=0$, for all $B$-self adjoint nilpotent centroid $N$ of $\g$. In \textbf{Claim 3}, given in the proof of \textbf{Cor. \ref{corolario current}}, we proved there that $\mathcal{A}$ is non-zero and it is straightforward to see that it is an ideal of $\g$. Since $\g$ is nilpotent then $\mathcal{A} \cap C(\g) \neq \{0\}$. Let $c$ be a non-zero element of $\mathcal{A} \cap C(\g)$. Due to $c$ belongs to $C(\g)$ and $\g$ is indecomposable, then $c$  belongs to $[\g,\g]$, and $\F c$ is contained in $(\F c)^{\perp}$. Using the Witt decomposition associated to $\F c$, $\g$ has the vector space decomposition $\g=\F d \oplus \h \oplus \F c$, where $(\F c)^{\perp}=\h \oplus \F c$, $\h$ is a non-degenerate subspace of $\g$, $\h^{\perp}=\F d \oplus \F c$, $B(d,d)=B(c,c)=0$ and $B(c,d)=1$. In addition, let $B_{\h}:\h \times \h \to \F$ be the restriction $B\vert_{\h \times \h}$ and let $[\,\cdot\,,\,\cdot\,]_{\h}:\h \times \h \to \h$ be the skew-symmetric and bilinear map arising from the projection of the Lie bracket $[\,\cdot\,,\,\cdot\,]$ onto $\h$, restricted to elements in $\h$. By Medina-Revoy theorem (see \cite{Med}), $\h$ is a quadratic Lie algebra, with Lie bracket $[\,\cdot\,,\,\cdot\,]_{\h}$ and invariant metric $B_{\h}$. In addition, there exists a $B_{\h}$-skew-symmetric derivation $D$ of $\h$ such that:
\begin{equation}\label{corchete doble extension}
[x,y]\!=\![x,y]_{\h}\!+\!B_{\h}(D(x),y)c,\,\text{ and }\,[d,x]\!=\!D(x),\,\text{ for all }x,y \in \h.
\end{equation}
We say that $\g$ is the \emph{double extension of $\h$ by $D$} and we write $\g=\h(D)$.
\smallskip

Suppose that the current Lie algebra $\g \otimes \mathcal{S}$ admits an invariant metric $\bar{B}$ and consider the bilinear map $\alpha:\mathcal{S} \times \mathcal{S} \to \Gamma(\g)$ of \textbf{Thm. \ref{corolario caracterizacion current}}, such that:
\begin{equation}\label{relacion metricas con alpha}
\bar{B}(x \otimes s,y \otimes t)=B(\alpha(t,s)(x),y)=B(x,\alpha(s,t)(y)),
\end{equation}
for all $x,y$ in $\g$ and $s,t$ in $\mathcal{S}$ (see \eqref{metrica en current Lie algebra}). Then, $\alpha(s,t)+\alpha(t,s)$ is a $B$-self adjoint centroid of $\g$. As in the proof of \textbf{Cor. \ref{corolario current}}, there are bilinear maps $\gamma:\mathcal{S} \otimes \mathcal{S} \to \F$ and $\sigma:\mathcal{S} \times \mathcal{S} \to \Gamma(\g)$, such that:
$$
\alpha(s,t)+\alpha(t,s)=\gamma(s,t)\Id_{\g}+\sigma(s,t),\quad \text{ for all }s,t \in \mathcal{S},
$$
where $\sigma(s,t)$ is a $B$-self adjoint nilpotent centroid of $\g$. The choice of $c$ implies $\sigma(s,t)(c)=0$. Then, by \textbf{Thm. \ref{corolario caracterizacion current}.(i)}, $2\alpha(s,t)(c)=\gamma(s,t)c$, which proves that $\F c$ is an \emph{ideal of $\g$ invariant under $\alpha$}, that is $\alpha(s,t)(\F c) \subset \F c$ for all $s,t$ in $\mathcal{S}$. Similarly, by \eqref{relacion metricas con alpha}, we have:
\begin{equation}\label{eq1DB}
B\left(\alpha(s,t)((\F c)^{\perp}),\F c \right)=B\left((\F c)^{\perp},\alpha(t,s)(\F c) \right)=\{0\},
\end{equation}
for all $s,t$ in $\mathcal{S}$. Then, $(\F c)^{\perp}$ is also an ideal of $\g$ invariant under $\alpha$. Observe that both $\F c \otimes \mathcal{S}$ and $(\F c)^{\perp} \otimes \mathcal{S}$ are ideals of $\g \otimes \mathcal{S}$ satisfying $\F c \otimes \mathcal{S} \subset (\F c)^{\perp} \otimes \mathcal{S}$. We claim that $(\F c)^{\perp} \otimes \mathcal{S}=(\F c \otimes \mathcal{S})^{\perp}$. Indeed, from \eqref{relacion metricas con alpha} and \eqref{eq1DB} we deduce that $(\F c)^{\perp} \otimes \mathcal{S} \subset (\F c \otimes \mathcal{S})^{\perp}$. Now we will prove the other inclusion. Let $\bar{x}$ be an element of $(\F c \otimes \mathcal{S})^{\perp}$. Since $\g=\F d \oplus \h \oplus \F c$, we can write $\bar{x}$ as: $\bar{x}=d \otimes s+\bar{y}+c \otimes t$, where $\bar{y}$ is in $\h \otimes \mathcal{S}$ and $s,t$ are elements of $\mathcal{S}$. Let $s^{\prime}$ be an arbitrary element of $\mathcal{S}$. As $\bar{y}+c \otimes t$ belongs to $(\h \otimes \mathcal{S}) \oplus (\F c \otimes \mathcal{S})$, and
$$
(\h \otimes \mathcal{S}) \oplus (\F c \otimes \mathcal{S}) \subset (\F c)^{\perp} \otimes \mathcal{S} \subset (\F c \otimes \mathcal{S})^{\perp},
$$
then $\bar{B}(\bar{x},c \otimes s^{\prime})=\bar{B}(D \otimes s,c \otimes s^{\prime})=B(D,\alpha(s,s^{\prime})(c))=0$. Since $\F c$ is an invariant ideal under $\alpha$, it follows: $B^{\flat}(\alpha(s,s^{\prime})(c))=0$, then $\alpha(s,s^{\prime})(c)=0$. Due to $c$ lies in $[\g,\g]$, by \textbf{Thm. \ref{corolario caracterizacion current}.(i)-(ii)}, we have $\mathcal{F}(s^{\prime})(c \otimes s)=\alpha(s^{\prime},s)(c)=\alpha(s,s^{\prime})(c)=0$, where $s^{\prime}$ is arbitrary, then $c \otimes s$ belongs to $\Ker(\mathcal{F}(s^{\prime}))$ for all $s^{\prime}$ in $\mathcal{S}$. Then, by \textbf{Thm. \ref{corolario caracterizacion current}.(ii)}, $c \otimes s=0$, thereby, $s=0$, and $\bar{x}=\bar{y}+c \otimes t$ belongs to $(\F c)^{\perp} \otimes \mathcal{S}$, proving that $(\F c)^{\perp} \otimes \mathcal{S}=(\F c \otimes \mathcal{S})^{\perp}$. This implies that $\g \otimes \mathcal{S}$ has the following vector space decomposition:
\begin{equation}\label{descompsicion para gS}
\g \otimes \mathcal{S}=(\F d \otimes \mathcal{S}) \oplus (\h \otimes \mathcal{S}) \oplus (\F c \otimes \mathcal{S}),
\end{equation}
where $(\F c \otimes \mathcal{S})^{\perp}=(\h \otimes \mathcal{S}) \oplus (\F c \otimes \mathcal{S})$, $\h \otimes \mathcal{S}$ is a non-degenerate subspace of $\g \otimes \mathcal{S}$ and $(\h \otimes \mathcal{S})^{\perp}=(\F d \otimes \mathcal{S}) \oplus (\F c \otimes \mathcal{S})$. According to \eqref{corchete doble extension} and \eqref{descompsicion para gS}, the Lie bracket $[\,\cdot\,,\,\cdot\,]$ of $\g \otimes \mathcal{S}$ has the following decomposition:
$$
\aligned
[x \otimes s,y \otimes t]&=[x,y]_{\h} \otimes s\,t+B_{\h}(D(x),y)\,c \otimes s\,t,\\
[D \otimes s,x \otimes t]&=D(x) \otimes s\,t,\,\text{ for all }x,y \in \h,\,\text{ and }
s,t \in \mathcal{S}.
\endaligned
$$
The choice of $c$ and \textbf{Thm. \ref{corolario caracterizacion current}.(i)} imply, $2\alpha(s,t)(c)=\gamma(s,t)c$. In the proof of \textbf{Cor. \ref{corolario current}}, we showed there that $\gamma$ is an invariant, symmetric and non-degenerate bilinear form on $\mathcal{S}$. Using \eqref{relacion metricas con alpha} and $B(d,c)=1$, we obtain, $\bar{B}(d \otimes s,c \otimes t)=B(d,\alpha(s,t)(c))=\frac{1}{2}\gamma(s,t)$, for all $s,t$ in $\mathcal{S}$. Let $\gamma^{\prime}=\frac{1}{2}\gamma$, then $\bar{B}(d \otimes s,c \otimes t)=\gamma^{\prime}(s,t)$, for all $s,t$ in $\mathcal{S}$.
\smallskip

Let $\alpha_{\h}:\mathcal{S} \times \mathcal{S} \to \End_{\F}(\h)$ be the bilinear map arising from the projection of $\alpha$ onto $\h$, restricted to $\h \times \h$. Then, for any $x$ in $\h$ and $s,t$ in $\mathcal{S}$, there exists an scalar $\beta(s,t,x)$ in $\F$, such that:
\begin{equation}\label{ecuacion para alpha 1}
\alpha(s,t)(x)=\alpha_{\h}(s,t)(x)+\beta(s,t,x)\,c \in \h \oplus \F c.
\end{equation}
Let $x,y$ be in $\h$; since $B(\alpha(s,t)(x),y)\!=\!B(x,\alpha(t,s)(y))$ (see \eqref{relacion metricas con alpha}), then 
\begin{equation}\label{transpuesta para alpha-h}
B_{\h}(\alpha_{\h}(s,t)(x),y)=B_{\h}(x,\alpha_{\h}(t,s)(y)),\,\text{ for all }s,t \in \mathcal{S}.
\end{equation}
There are bilinear maps $\eta,\zeta:\mathcal{S} \times \mathcal{S} \to \F$ and $f:\mathcal{S} \times \mathcal{S} \to \h$, such that $\alpha(s,t)(d)=\eta(s,t)d+f(s,t)+\zeta(s,t)c$, for all $s,t$ in $\mathcal{S}$. From $B(\alpha(s,t)(d),z)=B(d,\alpha(t,s)(z))$, for any $z$ in $\g$, we get: $\eta=\gamma^{\prime}$, $\beta(t,s,x)=B_{\h}(f(s,t),x)$ and $\zeta$ is symmetric; in such a way that:
\begin{equation}\label{ecuacion para D}
\alpha(s,t)(d)=\gamma^{\prime}(s,t)d+f(s,t)+\zeta(s,t)c,\,\,\text{ for all }s,t \in \mathcal{S}.
\end{equation}
We claim that:
$$
\alpha_{\h}(s,t) \text{ belongs to } \Gamma(\h), \text{ and }D \circ \alpha_{\h}(s,t)=\alpha_{\h}(s,t) \circ D=\gamma^{\prime}(s,t)D,
$$
for all $s,t$ in $\mathcal{S}$. Indeed, let $x,y$ be in $\h$; from
$$
\alpha(s,t)([d,x])=[\alpha(s,t)(d),x]=[d,\alpha(s,t)(x)],
$$
we get:
\begin{align}
\label{DE1} \alpha_{\h}(s,t)\circ D &=\gamma^{\prime}(s,t)D+\ad_{\h}(f(s,t))=D \circ \alpha_{\h}(s,t),\text{ and }\\
\label{DE2} D(f(s,t))=0,& \text{ then } f(s,t) \in \Ker(D),\,\,\text{ for all }s,t \in \mathcal{S}.
\end{align}
Analogously, from $\alpha(s,t)([x,y])=[\alpha(s,t)(x),y]=[x,\alpha(s,t)(y)]$ and using \eqref{corchete doble extension} along with \eqref{ecuacion para alpha 1}, we get $\alpha_{\h}(s,t)([x,y]_{\h})=[\alpha_{\h}(s,t)(x),y]_{\h}$, which proves that $\alpha_{\h}(s,t)$ belongs to $\Gamma(\h)$. Observe from \eqref{transpuesta para alpha-h} that: $\alpha_{\h}(s,t)^{\ast}=\alpha_{\h}(t,s)$, for all $s,t$ in $\mathcal{S}$. In addition, since $\h \otimes \mathcal{S}$ is a non-degenerate subspace of $\g \otimes \mathcal{S}$, then $\cap_{s \in \mathcal{S}}\Ker(\mathcal{F}_{\h}(s))=\{0\}$, where $\mathcal{F}_{\h}(s)(x \otimes t)=\alpha_{\h}(s,t)(x)$, which proves that $\alpha_{\h}$ satisfies the conditions of \textbf{Thm. \ref{corolario caracterizacion current}}, for the quadratic Lie algebra $(\h,[\,\cdot\,,\,\cdot\,]_{\h},B_{\h})$. Thus, the current Lie algebra $\h \otimes \mathcal{S}$ admits an invariant metric given by the restriction from $\bar{B}$ to $(\h \otimes \mathcal{S})\times (\h \otimes \mathcal{S})$.
\smallskip

Let $x,y$ be in $\h$. From $\bar{B}(d\otimes s,[x,y] \otimes s^{\prime}t)=\bar{B}([d \otimes s,x \otimes s^{\prime}],y \otimes t)$, we obtain: $\alpha_{\h}(t,ss^{\prime})\circ D=\gamma^{\prime}(s,s^{\prime}t)D$, for all $s,s^{\prime}$ and $t$ in $\mathcal{S}$. Now, we use that $\gamma^{\prime}$ is invariant to obtain $\gamma^{\prime}(s,t)D=\alpha_{\h}(s,t)\circ D$, for any $s,t$ in $\mathcal{S}$, which by \eqref{DE1}, implies that $f(s,t)$ belongs to $C(\h)$.
\smallskip

We have given conditions to extend $\alpha_{\h}:\mathcal{S} \!\times \! \mathcal{S}\! \to \!\Gamma(\h)$, from $\h$ to $\h(D)$, so that $\h(D) \otimes \mathcal{S}$ admits an invariant metric (see \eqref{ecuacion para alpha 1}, \eqref{ecuacion para D}, \eqref{DE1} and \eqref{DE2}), and we showed that any current Lie algebra $\g \otimes \mathcal{S}$ endowed with an invariant metric has this structure, where $\g=\h(D)$ is an indecomposable, nilpotent and quadratic Lie algebra. We gather these findings in the next result.

\begin{Theorem}\label{doble extension}{\sl
Let $\h$ be a quadratic Lie algebra with invariant metric $B_{\h}$. Let $D$ be a $B_{\h}$-skew symmetric derivation of $\h$ and let $c$ and $d$ be symbols. In the vector space $\h(D)=\F d \oplus \h \oplus \F c$, consider the quadratic Lie algebra structure given by the double extension of $\h$ by $D$ (see \eqref{corchete doble extension}). Let $\mathcal{S}$ be an associative and commutative algebra with unit, admitting a non-degenerate, symmetric and invariant bilinear form $\gamma^{\prime}$. Let $f:\mathcal{S} \times \mathcal{S} \to C(\h) \cap \Ker(D)$ be a bilinear map and let $\zeta:\mathcal{S} \times \mathcal{S} \to \F$ be a symmetric and bilinear form on $\mathcal{S}$. Assume that the current Lie algebra $\h \otimes \mathcal{S}$ admits an invariant metric and let $\alpha_{\h}:\mathcal{S} \times \mathcal{S} \to \Gamma(\h)$ be the bilinear map of \textbf{Thm. \ref{corolario caracterizacion current}}. If
$$
\alpha_{\h}(s,t)\circ D=D \circ \alpha_{\h}(s,t)=\gamma^{\prime}(s,t)D,\text{ for all }s,t \in \mathcal{S},
$$
then $\h(D) \otimes \mathcal{S}$ admits an invariant metric $\bar{B}$, given by:
\begin{equation}\label{bar B}
\begin{split}
\bar{B}(d \otimes s,c \otimes t)&=\gamma^{\prime}(s,t),\\
\bar{B}(x \otimes s,y \otimes t)&=B_{\h}(\alpha_{\h}(t,s)(x),y),\\
\bar{B}(d \otimes s,d \otimes t)&=\zeta(s,t),\,\text{ and }\,\bar{B}(c \otimes s,c \otimes t)=0,\\
\end{split}
\end{equation}
for all $x,y$ in $\h$ and $s,t$ in $\mathcal{S}$. In addition, any current Lie algebra $\g \otimes \mathcal{S}$ admitting an invariant metric $\bar{B}$ can be constructed in this way, where $\g=\h(D)$ is an indecomposable, quadratic and nilpotent Lie algebra over an algebraically closed field $\F$ of zero characteristic, where $\h \otimes \mathcal{S}$ admits an invariant metric.}
\end{Theorem}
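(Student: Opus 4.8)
The plan is to prove the two halves of the statement separately, since most of the hard work has already been distilled into the discussion preceding the theorem. For the first (constructive) half, I would take the data $\h$, $B_{\h}$, $D$, $\gamma^{\prime}$, $f$, $\zeta$, and $\alpha_{\h}$ as given, and build a bilinear map $\alpha:\mathcal{S}\times\mathcal{S}\to\Gamma(\h(D))$ by prescribing it on the three summands $\F d$, $\h$, $\F c$ of $\h(D)$ exactly as in \eqref{ecuacion para alpha 1}, \eqref{ecuacion para D}, and $2\alpha(s,t)(c)=\gamma(s,t)c$ with $\gamma=2\gamma^{\prime}$; that is, $\alpha(s,t)(x)=\alpha_{\h}(s,t)(x)+B_{\h}(f(t,s),x)\,c$ for $x\in\h$, $\alpha(s,t)(d)=\gamma^{\prime}(s,t)d+f(s,t)+\zeta(s,t)c$, and $\alpha(s,t)(c)=\gamma^{\prime}(s,t)c$. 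Then I would verify, one summand at a time, that each $\alpha(s,t)$ is a centroid of $\h(D)$: this is where the hypotheses $f(s,t)\in C(\h)\cap\Ker(D)$ and $\alpha_{\h}(s,t)\circ D=D\circ\alpha_{\h}(s,t)=\gamma^{\prime}(s,t)D$ get used, precisely to make the identities $\alpha(s,t)([d,x])=[\alpha(s,t)(d),x]=[d,\alpha(s,t)(x)]$ and $\alpha(s,t)([x,y])=[\alpha(s,t)(x),y]$ hold — these are just \eqref{DE1}, \eqref{DE2} and the computation around \eqref{transpuesta para alpha-h} read in reverse.

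Next I would check that $\alpha$ satisfies the three conditions of \textbf{Thm. \ref{corolario caracterizacion current}}. Condition \textbf{(i)} ($\alpha(s,t)|_{[\h(D),\h(D)]}$ factors through $ss^{\prime}$) follows from invariance of $\gamma^{\prime}$ together with $[\h(D),\h(D)]\subset\h\oplus\F c$ and the fact that $\alpha_{\h}$ already satisfies \textbf{(i)} for $\h$; condition \textbf{(iii)} ($\alpha(s,t)^{\ast}=\alpha(t,s)$ with respect to $B$) is the content of $\alpha_{\h}(s,t)^{\ast}=\alpha_{\h}(t,s)$ plus $\eta=\gamma^{\prime}$, $\zeta$ symmetric, and $\beta(t,s,x)=B_{\h}(f(s,t),x)$, read off from $B(d,c)=1$ and the block form of $B$; condition \textbf{(ii)} ($\bigcap_s\Ker\mathcal{F}(s)=\{0\}$) I would get from the corresponding statement for $\alpha_{\h}$ (which holds because $\h\otimes\mathcal{S}$ has an invariant metric) together with non-degeneracy of $\gamma^{\prime}$, arguing exactly as in the Heisenberg example: pick a basis $\{s_i\}$ of $\mathcal{S}$ and a $\gamma^{\prime}$-dual basis $\{s_i^{\prime}\}$, write an element of the intersection in coordinates, and use $\mathcal{F}(s_i^{\prime})$ to kill the $\F d$- and $\h$-components and then $\mathcal{F}(s_i)$ to kill the $\F c$-component. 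With \textbf{(i)}--\textbf{(iii)} in hand, \textbf{Thm. \ref{corolario caracterizacion current}} produces an invariant metric $\bar{B}$ on $\h(D)\otimes\mathcal{S}$, and evaluating $\bar{B}(u\otimes s,v\otimes t)=B(\alpha(t,s)(u),v)$ on the pairs $d,c$; $x,y\in\h$; $d,d$; $c,c$ gives exactly the formulas \eqref{bar B}, using $B(d,c)=1$, $B|_{\h\times\h}=B_{\h}$, $B(c,c)=B(d,d)=0$.

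For the second (structural) half, I would simply assemble the chain of deductions carried out in the paragraphs between \eqref{relacion metricas con alpha} and the theorem statement. Given $\g$ indecomposable, nilpotent, quadratic over an algebraically closed field of characteristic zero and $\g\otimes\mathcal{S}$ with invariant metric $\bar{B}$: \textbf{Cor. \ref{corolario current}} (or its proof) gives the decomposition $\alpha(s,t)+\alpha(t,s)=\gamma(s,t)\Id_{\g}+\sigma(s,t)$ with $\gamma$ non-degenerate, symmetric, invariant and $\sigma(s,t)$ nilpotent $B$-self-adjoint; choosing $c\in\mathcal{A}\cap C(\g)\setminus\{0\}$ (non-empty because $\mathcal{A}\neq0$ by \textbf{Claim 3} and $\g$ is nilpotent) forces $\sigma(s,t)(c)=0$, hence $\F c$ is $\alpha$-invariant, hence (via \eqref{eq1DB}) so is $(\F c)^{\perp}$, and Medina--Revoy gives $\g=\h(D)$. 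The computation showing $(\F c)^{\perp}\otimes\mathcal{S}=(\F c\otimes\mathcal{S})^{\perp}$, the identification of $\alpha_{\h}$ as satisfying the hypotheses of \textbf{Thm. \ref{corolario caracterizacion current}} for $\h$ (so $\h\otimes\mathcal{S}$ inherits an invariant metric), the identities \eqref{DE1}--\eqref{DE2}, $f(s,t)\in C(\h)\cap\Ker(D)$, $\zeta$ symmetric, and $\gamma^{\prime}=\tfrac12\gamma$ — all already established — show that $\g\otimes\mathcal{S}$ with $\bar{B}$ arises from the construction of the first half. I expect the main obstacle to be purely bookkeeping: keeping the six bilinear forms/maps ($\gamma^{\prime}$, $f$, $\zeta$, $\alpha_{\h}$, $\beta$, $\sigma$) and the transpose relation $\alpha(s,t)^{\ast}=\alpha(t,s)$ consistent across the block decomposition $\F d\oplus\h\oplus\F c$, and making sure each of the centroid axiom and each of \textbf{(i)}--\textbf{(iii)} is verified on every block without circular appeal to the conclusion. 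No genuinely new idea beyond what precedes the statement should be needed.
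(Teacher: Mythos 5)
Your proposal follows the paper's proof essentially verbatim: define $\alpha$ blockwise on $\F d\oplus\h\oplus\F c$ exactly as the paper does, verify conditions \textbf{(i)}--\textbf{(iii)} of Thm.~\ref{corolario caracterizacion current} (which the paper dismisses as ``straightforward'' but you usefully sketch), and refer the converse back to the analysis preceding the theorem. The one divergence is that you write $B_{\h}(f(t,s),x)$ where the paper's proof writes $B_{\h}(f(s,t),x)$; your index order is the one actually consistent with the earlier relation $\beta(t,s,x)=B_{\h}(f(s,t),x)$ and with $\alpha(s,t)^{\ast}=\alpha(t,s)$ when $f$ is not symmetric, so this is a correction rather than a gap.
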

\begin{proof}
The proof that $\h(D) \otimes \mathcal{S}$ admits an invariant metric if $\h \otimes \mathcal{S}$ also admits one, it can be obtained by defining $\alpha:\mathcal{S} \times \mathcal{S} \to \Gamma(\h(D))$ as follows:
$$
\aligned
\alpha(s,t)(x)&=\alpha_{\h}(s,t)(x)+B_{\h}(f(s,t),x)c,\,\text{ for all }x \in \h,\\
\alpha(s,t)(d)&=\gamma^{\prime}(s,t)\,d+f(s,t)+\zeta(s,t)c,\\
\alpha(s,t)(c)&=\gamma^{\prime}(s,t)c,\,\text{ for all }s,t \in \mathcal{S}.
\endaligned
$$
It is straightforward to verify that $\alpha$ satisfies the conditions given in \textbf{(i)}, \textbf{(ii)} and \textbf{(iii)} of \textbf{Thm. \ref{corolario caracterizacion current}}, in such a way that the invariant metric induced by $\alpha$ on $\h(D) \otimes \mathcal{S}$, is $\bar{B}$ defined in \eqref{bar B}. The proof that any current Lie algebra $\g \otimes \mathcal{S}$ endowed with an invariant metric, it can be constructed as in the statement, where $\g=\h(D)$ is an indecomposable, quadratic and nilpotent Lie algebra, is given in the analysis we made at the beginning of this section. 
\end{proof}

\section{The current Lie algebra $\g \otimes \mathcal{S}$ admits an invariant metric}

Now we assume that $\g \otimes \mathcal{S}$ is a quadratic Lie algebra, and we state conditions so that $\g$ admits an invariant metric. We start by setting a linear map $\g \to \g^{\ast}$.

\begin{Prop}\label{prop caracterizacion}{\sl
Let $\g$ be a Lie algebra and $\mathcal{S}$ be an associative and commutative algebra with unit over $\F$. Assume that the current Lie algebra $\g \otimes \mathcal{S}$ admits an invariant metric $\bar{B}$. Let $F:\g \otimes \mathcal{S} \to \g$ be a surjective linear map. For each non-zero $s$ in $\mathcal{S}$ and a linear map $H:\g \to \g \otimes \mathcal{S}$ such that $F \circ H=\operatorname{Id}_{\g}$, let $\psi_{s}$ be the linear map $\psi_s=H^{\ast} \circ \bar{B}^{\flat} \circ \iota_s:\g \to \g^{\ast}$. Then, the ordered pair $(\psi_s,F)$ makes commutative the diagram:
\begin{equation}\label{diagrama conmutativo 2}
\xymatrix{
\g \ar[r]^{\psi_s} \ar[d]_{\iota_s}& \g^{*} \ar[d]^{F^{\ast}}\\
\g \otimes \mathcal{S} \ar[r]^{{\bar{B}}^{\flat}} & \left(\g \otimes \mathcal{S} \right)^{*}
}
\end{equation} 
if and only if $\,\Im\left(\iota_s\right)^{\perp}=\Ker(F)$. If the ordered pair $(\psi_s,F)$ makes commutative the diagram \eqref{diagrama conmutativo 2}, then $\psi_s$ is bijective.}
\end{Prop}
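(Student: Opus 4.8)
The plan is to unwind all the maps in the square \eqref{diagrama conmutativo 2} and reduce the commutativity to an orthogonality statement, using only that $\bar B$ is non-degenerate and that $F\circ H=\Id_{\g}$. First I would write out both composites in the square applied to an arbitrary $x\in\g$ and paired against an arbitrary $y\otimes t\in\g\otimes\mathcal S$. Going down then right gives $F^{\ast}(\psi_s(x))(y\otimes t)=\psi_s(x)(F(y\otimes t))$, and by definition $\psi_s=H^{\ast}\circ\bar B^{\flat}\circ\iota_s$, so this equals $\bar B^{\flat}(x\otimes s)(H(F(y\otimes t)))=\bar B(x\otimes s,\,H(F(y\otimes t)))$. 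Going right then down gives $\bar B^{\flat}(\iota_s(x))(y\otimes t)=\bar B(x\otimes s,\,y\otimes t)$. Hence the square commutes on the nose if and only if
$$
\bar B\bigl(x\otimes s,\;H(F(y\otimes t))-y\otimes t\bigr)=0\quad\text{for all }x\in\g,\ y\otimes t\in\g\otimes\mathcal S.
$$
Since $\bar B$ is non-degenerate, the left slot $x\otimes s$ ranges over $\Im(\iota_s)$, so this is equivalent to: $H(F(u))-u\in\Im(\iota_s)^{\perp}$ for every $u\in\g\otimes\mathcal S$.

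Next I would identify that last condition with $\Im(\iota_s)^{\perp}=\Ker(F)$. The operator $P:=H\circ F$ on $\g\otimes\mathcal S$ satisfies $P^{2}=H\circ(F\circ H)\circ F=H\circ F=P$ because $F\circ H=\Id_{\g}$; thus $P$ is a projection with $\Im(P)=\Im(H)$ and $\Ker(P)=\Ker(F)$ (here I use that $F$ is surjective and $H$ is injective, both forced by $F\circ H=\Id_{\g}$), so $\g\otimes\mathcal S=\Im(H)\oplus\Ker(F)$ and $u-P(u)\in\Ker(F)$ for all $u$. Therefore $H(F(u))-u=-(u-P(u))$ always lies in $\Ker(F)$. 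So the commutativity condition "$u-H(F(u))\in\Im(\iota_s)^{\perp}$ for all $u$" says exactly that $\Ker(F)\subseteq\Im(\iota_s)^{\perp}$. For the reverse inclusion, note $\dim\Im(\iota_s)^{\perp}=\dim(\g\otimes\mathcal S)-\dim\Im(\iota_s)$; since $\iota_s$ is injective for $s\neq 0$ (indeed $\iota_s(x)=x\otimes s=0$ forces $x=0$ when $s\neq0$), $\dim\Im(\iota_s)=\dim\g$, and likewise $\dim\Ker(F)=\dim(\g\otimes\mathcal S)-\dim\g$ by surjectivity of $F$. The two subspaces have equal dimension, so $\Ker(F)\subseteq\Im(\iota_s)^{\perp}$ already forces equality, giving the claimed equivalence.

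Finally, assuming the square commutes, I would prove $\psi_s$ is bijective by a dimension count: since $\dim\g=\dim\g^{\ast}$ it suffices to show $\psi_s$ is injective. Suppose $\psi_s(x)=0$, i.e. $H^{\ast}(\bar B^{\flat}(x\otimes s))=0$, which means $\bar B(x\otimes s,H(z))=0$ for all $z\in\g$, i.e. $x\otimes s\in\Im(H)^{\perp}$. On the other hand, from the commutativity we also have, for all $u\in\g\otimes\mathcal S$, $\bar B(x\otimes s,u)=\bar B(x\otimes s,H(F(u)))$ by the displayed identity above (applied with the roles reorganised: $\bar B(x\otimes s,u)-\bar B(x\otimes s,H(F(u)))=\bar B(x\otimes s,u-P(u))$, and $x\otimes s\perp\Ker(F)$ by the equivalence just proved since $\Ker(F)=\Im(\iota_s)^{\perp}$ and $x\otimes s\in\Im(\iota_s)$). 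Combining, $\bar B(x\otimes s,u)=\bar B(x\otimes s,H(F(u)))=0$ for all $u$, so $x\otimes s=0$ by non-degeneracy of $\bar B$, and since $s\neq0$ we get $x=0$. Hence $\psi_s$ is injective, therefore bijective. The only mildly delicate point is keeping straight which non-degeneracy/surjectivity/injectivity is invoked where; once $P=H\circ F$ is recognised as a projection with kernel $\Ker(F)$, everything else is bookkeeping.
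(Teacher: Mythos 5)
Your proof is correct, and the opening reduction is the same as the paper's: both unwind the square to the condition that $u-H(F(u))$ lies in $\Im(\iota_s)^{\perp}$ for every $u$, which immediately gives $\Ker(F)\subseteq\Im(\iota_s)^{\perp}$. Where you genuinely diverge is in the reverse inclusion $\Im(\iota_s)^{\perp}\subseteq\Ker(F)$. The paper first proves that $\psi_s$ is injective (from $\bar{B}(x\otimes s,y\otimes t)=\psi_s(x)(F(y\otimes t))$ and non-degeneracy of $\bar{B}$), then uses bijectivity of the dual map $\psi_s^{\ast}$ to force the $\Im(H)$-component of an element of $\Im(\iota_s)^{\perp}$ to vanish in the decomposition $\g\otimes\mathcal{S}=\Ker(F)\oplus\Im(H)$. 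You instead observe that $\dim\Im(\iota_s)^{\perp}=\dim(\g\otimes\mathcal{S})-\dim\g=\dim\Ker(F)$ (using non-degeneracy of $\bar{B}$, injectivity of $\iota_s$ for $s\neq 0$, and surjectivity of $F$), so the one inclusion already forces equality. This is cleaner: it makes the equivalence self-contained, with bijectivity of $\psi_s$ a separate downstream consequence rather than an ingredient of the equivalence, and it makes explicit the projection structure $P=H\circ F$ that the paper only uses implicitly. The paper's route, on the other hand, records the identity $\bar{B}(x\otimes s,y\otimes t)=\psi_s(x)(F(y\otimes t))$ in a form that is reused verbatim in Theorem~\ref{corolario caracterizacion}. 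Your injectivity argument for $\psi_s$ (splitting $\bar{B}(x\otimes s,u)$ into the $\Im(H)$ part and the $\Ker(F)$ part) is slightly more roundabout than the paper's but equally valid. No gaps.
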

\begin{proof}
$(\Rightarrow)$ If the ordered pair $(\psi_s,F)$ makes commutative the diagram \eqref{diagrama conmutativo 2}, then ${\bar{B}}^{\flat} \circ \iota_s=F^{\ast} \circ \psi_s$ which amounts to say:
\begin{equation}\label{comparing 1}
\bar{B}(x \otimes s,y \otimes t)=\psi_s(x)\left(F(y \otimes t)\right),\quad \mbox{ for all }x,y \in \g,\,\mbox{ and }t \in \mathcal{S}
\end{equation}
Since $\psi_s$ is equal to $H^{\ast} \circ \bar{B} \circ \iota_s$, then:
\begin{equation}\label{comparing 2}
\psi_s(x)\left(F(y \otimes t)\right)=\bar{B}\left(x \otimes s,H(F(y \otimes t))\right).
\end{equation}
Comparing \eqref{comparing 1} and \eqref{comparing 2}, we deduce that $y \otimes t-H(F(y \otimes t))$ belongs to $\Im(\iota_s)^{\perp}$, for all $y$ in $\g$ and $t$ in $\mathcal{S}$. Then $\g \otimes \mathcal{S}=\Im\left(\iota_s\right)^{\perp}+\Im(H)$ and $\Ker(F) \subset \Im(\iota_s)^{\perp}$. Let $u$ be in $\Ker(\psi_s)$, by \eqref{comparing 1}, $\bar{B}^{\flat}(u \otimes s)=0$; the fact that $\bar{B}^{\flat}$ is invertible implies $u \otimes s=0$. As $s$ is non-zero, $u=0$, then $\psi_s$ is bijective. 
\smallskip

Now we shall prove that $\Im(\iota_s)^{\perp} \subset \Ker(F)$. Let $y^{\prime}$ be in $\Im(\iota_s)^{\perp}$. As $\g \otimes \mathcal{S}=\Ker(F) \oplus \Im(H)$, there exist $w^{\prime}$ in $\Ker(F)$ and $z$ in $\g$ such that $y^{\prime}=w^{\prime}+H(z)$, then $F(y^{\prime})=z$. Consider the dual map $\psi_s^{\ast}:\g \to \g^{\ast}$, of $\psi_s$ which satisfies $\psi_s^{\ast}(x)(y)=\psi(y)(x)$, for all $x$ and $y$ in $\g$. Let $u$ be in $\g$; by \eqref{comparing 1}, $0=\bar{B}(\iota_s(u),y^{\prime})=\psi_s(u)\left(F(y^{\prime})\right)=\psi_s(u)(z)=\psi_s^{\ast}(z)(u)$. Since $u$ is arbitrary, then $\psi_s^{\ast}(z)=0$. Due to $\psi_s$ is bijective, $\psi_s^{\ast}$ is also bijective, thus $z=0$. Hence $y^{\prime}=w^{\prime}$ lies in $\Ker(F)$, and $\Im(\iota_s)^{\perp}=\Ker(F)$.
\smallskip

$(\Leftarrow)$ Suppose that $\Im\left(\iota_s\right)^{\perp}=\Ker(F)$, for some non-zero $s$ in $\mathcal{S}$. We shall prove that the ordered pair $(\psi_s,F_s)$ makes commutative the diagram \eqref{diagrama conmutativo 2}, where $\psi_s=H^{\ast} \circ \bar{B}^{\flat} \circ \iota_s$ and $F \circ H=\operatorname{Id}_{\g}$.
\smallskip

Let $x,y$ be in $\g$ and $t$ be in $\mathcal{S}$. Then $y \otimes t\!-\!H\left(F(y \otimes t)\right)$ lies in $\Ker(F)\!=\!\Im\left(\iota_s\right)^{\perp}$, and
$$
\aligned
\bar{B}(\iota_s(x),y \otimes t)&=\bar{B}\left(x \otimes s,H\left(F(y \otimes t)\right)\right)=\bar{B}^{\flat}(\iota_s(x))(H(F(y \otimes t)))\\
&=H^{\ast} \circ \bar{B}^{\flat} \circ \iota_s(x)(F(y \otimes t))=\psi_s(x)(F(y \otimes t))\\
&=F^{\ast} \circ \psi_s(x)(y \otimes t), \text{ then } \bar{B}^{\flat} \circ \iota_s=F^{\ast} \circ \psi_s.
\endaligned
$$
Therefore, $(\psi_s,F)$ makes commutative the diagram \eqref{diagrama conmutativo 2}.
\end{proof}

Now we are able to state conditions for determining if $\g$ admits an invariant metric.

\begin{Theorem}\label{corolario caracterizacion}{\sl
Let $\g$ be a Lie algebra and $\mathcal{S}$ be a commutative and associative algebra with unit over $\F$. Assume that $\g \otimes \mathcal{S}$ admits an invariant metric $\bar{B}$. Then $\g$ admits an invariant metric if and only if there are a non-zero element $s$ in $\mathcal{S}$, and an ordered pair $(\psi_s,F)$ in $\Hom_{\F}(\g,\g^{\ast}) \times \Hom_{\F}(\g \otimes \mathcal{S},\g)$, such that:} 
\smallskip

{\sl \textbf{(i)}  $(\psi_s,F)$ makes commutative the diagram \eqref{diagrama conmutativo 2}.}

{\sl \textbf{(ii)} The map $\psi_s:\!\g \to \g^{\ast}$, satisfies $\psi_s(x)(y)\!=\!\psi_s(y)(x)$, for all $x,y$ in $\g$.}

{\sl \textbf{(iii)} The map $F\!:\!\g \otimes \mathcal{S}\! \to \g$ is surjective and it satisfies: $F([x \otimes t,y \otimes 1])\!=\![F(x \otimes t),y]$, for all $x,y$ in $\g$ and $t$ in $\mathcal{S}$.}

\end{Theorem}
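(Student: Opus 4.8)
The plan is to prove both directions of the equivalence, using \textbf{Prop. \ref{prop caracterizacion}} as the bridge between the commutative diagram and the bijectivity of $\psi_s$.

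For the direction $(\Leftarrow)$, I would assume the existence of $s\neq 0$ in $\mathcal{S}$ and a pair $(\psi_s,F)$ satisfying \textbf{(i)}, \textbf{(ii)}, \textbf{(iii)}, and show that $\psi_s$ arises as $B^{\flat}$ for an invariant metric $B$ on $\g$. Define $B(x,y)=\psi_s(x)(y)$ for all $x,y$ in $\g$. By \textbf{(ii)}, $B$ is symmetric. By \textbf{Prop. \ref{prop caracterizacion}} (whose hypothesis \textbf{(i)} is exactly our \textbf{(i)}), $\psi_s$ is bijective, hence $B$ is non-degenerate. The remaining point is invariance: $B(x,[y,z])=B([x,y],z)$. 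Here I would exploit the commutativity of the diagram \eqref{diagrama conmutativo 2}, which gives $\bar{B}(x\otimes s,y\otimes t)=\psi_s(x)(F(y\otimes t))=B(x,F(y\otimes t))$ (this is \eqref{comparing 1}), together with the invariance of $\bar{B}$ and the intertwining property \textbf{(iii)} of $F$. Concretely, for $x,y,z$ in $\g$, compute $B(x,[y,z])=B(x,F([y\otimes 1,z\otimes s]))$, then use \textbf{(iii)} in the form $F([y\otimes 1,z\otimes s])=F([z\otimes s,y\otimes 1])\cdot(-1)=-[F(z\otimes s),y]$, rewrite via \eqref{comparing 1} as $\bar{B}(x\otimes s,[y\otimes 1,z\otimes s])$, move the bracket using invariance of $\bar{B}$ to get $\bar{B}([x\otimes s,y\otimes 1],z\otimes s)=\bar{B}([x,y]\otimes s,z\otimes s)$, and then apply \eqref{comparing 1} once more plus symmetry of $\psi_s$ to land on $B([x,y],z)$. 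The bookkeeping with which tensor slot carries $s$ versus $1$ is the one place that needs care, but it is forced by the requirement that \textbf{(iii)} only controls brackets of the form $[\,\cdot\otimes t,\,\cdot\otimes 1]$.

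For the direction $(\Rightarrow)$, I would assume $\g$ admits an invariant metric $B$ and produce the required $s$, $\psi_s$, and $F$. The natural choice is $s=1$ (the unit of $\mathcal{S}$), $\psi_1=B^{\flat}:\g\to\g^{\ast}$, and for $F$ a candidate such as $\mathcal{F}(1)=B^{\sharp}\circ\iota_1^{\ast}\circ\bar{B}^{\flat}$ from \textbf{Prop. \ref{teorema caracterizacion current}}, which by \eqref{equivariancia S-generalizada} already satisfies the intertwining identity $F([x\otimes t,y\otimes 1])=[F(x\otimes t),y]$ demanded in \textbf{(iii)}. One then must check: that $F=\mathcal{F}(1)$ is surjective (or adjust it so that it is — surjectivity should follow from non-degeneracy of both $B$ and $\bar{B}$, since $B^{\flat}\circ\mathcal{F}(1)=\iota_1^{\ast}\circ\bar{B}^{\flat}$ and $\iota_1^{\ast}$ is surjective because $\iota_1$ is injective); that $\psi_1=B^{\flat}$ is symmetric, which is immediate from symmetry of $B$; and that the pair $(B^{\flat},\mathcal{F}(1))$ makes \eqref{diagrama conmutativo 2} commute. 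By the $(\Leftarrow)$ part of \textbf{Prop. \ref{prop caracterizacion}}, commutativity of the diagram is equivalent to $\Im(\iota_1)^{\perp}=\Ker(F)$, so it suffices to verify this identity for $F=\mathcal{F}(1)$; from $B^{\flat}\circ\mathcal{F}(1)=\iota_1^{\ast}\circ\bar{B}^{\flat}$ and injectivity of $B^{\flat}$ we get $\Ker(\mathcal{F}(1))=\Ker(\iota_1^{\ast}\circ\bar{B}^{\flat})=(\bar{B}^{\flat})^{-1}(\Ker\iota_1^{\ast})=(\bar{B}^{\flat})^{-1}((\Im\iota_1)^{\circ})=\Im(\iota_1)^{\perp}$, using that $\Ker\iota_1^{\ast}$ is the annihilator of $\Im\iota_1$ and that $\bar{B}^{\flat}$ converts annihilators into $\bar{B}$-orthogonals. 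I also need $\psi_1$ to coincide with the $\psi_s$ of \textbf{Prop. \ref{prop caracterizacion}}, i.e. $\psi_1=H^{\ast}\circ\bar{B}^{\flat}\circ\iota_1$ for an appropriate section $H$ of $F$; this is consistent because once the diagram commutes, \textbf{Prop. \ref{prop caracterizacion}} guarantees $\psi_1$ is determined as stated, and conversely any $H$ with $F\circ H=\Id_{\g}$ yields the same $\psi_1$.

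The main obstacle I anticipate is the invariance computation in the $(\Leftarrow)$ direction: correctly threading the unit $1$ and the fixed element $s$ through the three tensor factors so that every bracket appearing is of the form permitted by \textbf{(iii)}, while simultaneously invoking invariance and symmetry of $\bar{B}$ at the right moments. A secondary subtlety is that \textbf{(iii)} only gives the intertwining property for brackets against $y\otimes 1$, not against arbitrary $y\otimes u$; the argument must be arranged so this restricted form suffices, which it does precisely because $1$ is the unit and $\iota_1$ is a Lie algebra homomorphism, so $[y\otimes 1,z\otimes s]=[y,z]\otimes s$ keeps one slot at $1$ after expansion. Once these are handled, verifying symmetry and non-degeneracy is immediate from \textbf{(ii)} and \textbf{Prop. \ref{prop caracterizacion}} respectively.
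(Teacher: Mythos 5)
Your proposal follows essentially the same route as the paper in both directions: for $(\Leftarrow)$ you define $B(x,y)=\psi_s(x)(y)$, obtain symmetry from \textbf{(ii)}, non-degeneracy from \textbf{Prop. \ref{prop caracterizacion}}, and invariance by combining the commutative diagram, the invariance of $\bar{B}$, condition \textbf{(iii)} and the surjectivity of $F$; for $(\Rightarrow)$ you take $s=1$, $F=B^{\sharp}\circ\iota_1^{\ast}\circ\bar{B}^{\flat}$ and $\psi_1=B^{\flat}$, exactly as the paper does. The one bookkeeping correction needed is in your invariance chain: the second tensor slot must carry an arbitrary $t\in\mathcal{S}$ (extended by linearity to sums of elementary tensors), not the fixed $s$, since as written the chain establishes $B([x,y],F(z\otimes s))=B(x,[y,F(z\otimes s)])$ and the elements $F(z\otimes s)$ alone need not exhaust $\g$ — one needs surjectivity of $F$ on all of $\g\otimes\mathcal{S}$ to conclude invariance on all of $\g$.
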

\begin{proof}
$(\mathbf{\Leftarrow})$ Suppose there are an element $s \neq 0$ in $\mathcal{S}$, and an ordered pair $(\psi_s,F)$ satisfying \textbf{(i),(ii)} and \textbf{(iii)}. Let $x,y,z$ be in $\g$. We claim that $B:\g \times \g \to \F$, defined by $B(x,y)=\psi_s(x)(y)$, is an invariant metric on $\g$. From \textbf{(ii)}, $B$ is symmetric. We claim that $B$ is invariant. Let $t$ be in $\mathcal{S}$. Using \textbf{(i)}-\textbf{(iii)}, and that $\bar{B}$ is invariant, we get: 
\begin{equation*}\label{psi es morfismo de modulos}
\begin{split}
& \psi_s([x,y])\left(F(z \otimes t)\right)\!=\!(F^{\ast} \circ \psi_s)([x,y])(z \otimes t)\!=\!(\bar{B}^{\flat} \circ \iota_s)([x,y])(z \otimes t)\\
&=\!\bar{B}([x,y] \otimes s,z\otimes t)\!=\!\bar{B}(x \otimes s,[y,z] \otimes t)\!=\!\bar{B}^{\flat}(\iota_s(x))([y \otimes 1,z \otimes t])\\
&=\!F^{\ast} \circ \psi_s(x)([y \otimes 1,z \otimes t])\!=\!\psi_s(x)([y,F(z \otimes t)]).\\
\end{split}
\end{equation*}
Then, $\psi_s([x,y])\left(F(z \otimes t)\right)=\psi_s(x)([y,F(z \otimes t)])$. Whence, as $F_s$ is surjective, $\psi_s([x,y])=\psi_s(x) \circ \ad(y)$, which implies that $\psi_s$ is a morphism of $\g$-modules, and $B$ is an invariant form on $\g$. As the ordered pair $(\psi_s,F)$ makes commutative the diagram \eqref{diagrama conmutativo 2}, \textbf{Prop. \ref{prop caracterizacion}} implies that $\psi_s$ is bijective; thus $B$ is an invariant metric on $\g$.
\smallskip

$(\Rightarrow)$ Suppose that the Lie algebra $\g$ admits an invariant metric $B$. We shall prove the existence of an element $s \neq 0$ in $\mathcal{S}$ and a linear map $F:\g \otimes \mathcal{S} \to \g$, such that the ordered pair $(\psi_s,F)$ makes commutative the diagram \eqref{diagrama conmutativo 2}, where $\psi_s=H^{\ast} \circ \bar{B}^{\flat} \circ \iota_s$, and $F \circ H=\operatorname{Id}_{\g}$. 
\smallskip

Let us consider a basis $\{s_1,s_2,\cdots,s_m\}$ of $\mathcal{S}$, with $s_1=1$ and let $\{p^1,p^2,\cdots,p^m\} \subset S^{\ast}$ be its dual basis. Let $\widetilde{p}:\g \otimes \mathcal{S} \to \g$ be the linear map defined by $\widetilde{p}(x \otimes s)=p^1(s)x$ for all $x$ in $\g$ and for all $s$ in $\mathcal{S}$. We define the linear maps: $F=B^{\sharp} \circ \iota_1^{\ast} \circ \bar{B}^{\flat}:\g \otimes \mathcal{S} \to \g$ and $H=\bar{B}^{\sharp} \circ {\widetilde{p}}^{*} \circ B^{\flat}:\g \to \g \otimes \mathcal{S}$.
Then, 
\begin{align}
\label{F-H} \bar{B}(x \otimes s,y \otimes 1)&=B\left(F(x \otimes s),y\right),\text{ and }\\
\label{F-H 2} \bar{B}\left(H(x),y \otimes s\right)&=p^1(s)B(x,y),\,\text{ for all }x,y \in \g\,\text{ and }s \in \mathcal{S}.
\end{align}
From \eqref{F-H}, we get $F^{*} \circ B^{\flat}=\bar{B}^{\flat} \circ \iota_1$, so $\left(B^{\flat},F\right)$ makes commutative the diagram \eqref{diagrama conmutativo 2}. In addition, from \eqref{F-H} we get $F([x \otimes s,y \otimes 1])=[F(x \otimes s),y]$, for all $x,y$ in $\g$ and $s$ in $\mathcal{S}$, thus $F$ satisfies the condition \textbf{(iii)}. Letting $s=1$ in \eqref{F-H 2}, it follows $F \circ H=\operatorname{Id}_{\g}$, and $ \bar{B}(x \otimes 1,H(y))=B(x,y)=\bar{B}(H(x),y \otimes 1)$, thus $B^{\flat}=\psi_1$ satisfies the condition \textbf{(ii)}.
\end{proof}

The following result shows a particular case when the existence of an invariant metric on $\g \otimes \mathcal{S}$ implies that $\g$ also admits one.

\begin{Prop}\label{Prop ejemplo}{\sl
Let $\g$ be a Lie algebra over $\F$ such that $\g\!\!=\!\![\g,\g]$. Let $\mathcal{S}$ be a $m$-dimensional associative and commutative algebra with unit, having a non-zero element $s$ such that $s^m\!=\!0$ and $s^{m-1} \!\neq \!0$. If $\g \otimes \mathcal{S}$ admits an invariant metric $\bar{B}$, then $\g$ also admits an invariant metric. 
}
\end{Prop}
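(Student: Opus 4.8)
The plan is to produce an invariant metric on $\g$ directly from $\bar B$ (one could also route the argument through \textbf{Thm. \ref{corolario caracterizacion}}, but a direct construction is shorter). First I would record the structure of $\mathcal{S}$. Since $s^{m}=0$ and $s^{m-1}\neq 0$, the powers $1,s,s^{2},\dots,s^{m-1}$ are linearly independent: if $\sum_{i\ge k}c_{i}s^{i}=0$ with $c_{k}\neq 0$, then multiplying by $s^{m-1-k}$ gives $c_{k}s^{m-1}=0$, a contradiction. As $\dim_{\F}\mathcal{S}=m$, these powers form a basis, so $\mathcal{S}\cong\F[X]/(X^{m})$; in particular $s^{m-1}\mathcal{S}=\F s^{m-1}$.

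Next, for $a,b\in\mathcal{S}$ set $B_{a,b}:\g\times\g\to\F$, $B_{a,b}(x,y)=\bar B(x\otimes a,y\otimes b)$. Using $[x,x']\otimes a=[x\otimes a,x'\otimes 1]$ together with the invariance of $\bar B$, each $B_{a,b}$ is an invariant bilinear form on $\g$. The key step is the identity $B_{ac,b}=B_{a,cb}$ for all $a,b,c\in\mathcal{S}$: writing $[x,x']\otimes ac=[x\otimes a,x'\otimes c]$ and using invariance of $\bar B$ and then of $B_{a,cb}$ yields $B_{ac,b}([x,x'],y)=B_{a,cb}([x,x'],y)$ for all $x,x',y\in\g$; since $\g=[\g,\g]$, the brackets $[x,x']$ span $\g$, so the two forms coincide. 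In particular $B_{a,b}=B_{ab,1}$, so $B_{a,b}$ depends only on the product $ab$. Define $\Lambda:\mathcal{S}\to\Bil(\g)$ by $\Lambda(u)=B_{u,1}$; it is linear, each $\Lambda(u)$ is invariant, and $\Lambda(u)$ is symmetric because $\bar B$ is.

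Finally I would take $B:=\Lambda(s^{m-1})$, which is symmetric and invariant, and check non-degeneracy. Suppose $x\in\g$ satisfies $B(x,y)=0$ for all $y\in\g$. For any $y\in\g$ and $t\in\mathcal{S}$ we have $\bar B(x\otimes s^{m-1},y\otimes t)=\Lambda(s^{m-1}t)(x,y)$, and since $s^{m-1}t=\mu\,s^{m-1}$ for some $\mu\in\F$ this equals $\mu\,\Lambda(s^{m-1})(x,y)=\mu\,B(x,y)=0$. Hence $x\otimes s^{m-1}$ lies in the radical of $\bar B$, which is trivial because $\bar B$ is an invariant metric; thus $x\otimes s^{m-1}=0$, and since $s^{m-1}\neq 0$ this forces $x=0$. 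Therefore $B$ is a non-degenerate, symmetric, invariant bilinear form on $\g$, i.e. an invariant metric.

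The only step carrying genuine content is the factorization $B_{a,b}=B_{ab,1}$, which is precisely where the hypothesis $\g=[\g,\g]$ is used; everything else is bookkeeping. The nilpotency data on $\mathcal{S}$ serves only to supply a single nonzero product, $s^{m-1}$, that annihilates the maximal ideal of $\mathcal{S}$, which is exactly what forces $\Lambda(s^{m-1})$ to be non-degenerate. I do not anticipate any obstacle beyond bookkeeping once these two observations are in place.
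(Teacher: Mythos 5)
Your proposal is correct and follows essentially the same route as the paper: both define the candidate metric as $(x,y)\mapsto\bar B(x\otimes s^{m-1},y\otimes 1)$, use $\g=[\g,\g]$ to transfer symmetry and invariance from $\bar B$ and to move algebra elements across the bracket, and use $s^{m}=0$ together with non-degeneracy of $\bar B$ to kill the radical. Your intermediate factorization $B_{a,b}=B_{ab,1}$ is a slightly more systematic packaging of the computation the paper does directly in its displayed equation for $\bar B([\g,\g]\otimes s^{m-1},[\g,\g]\otimes s^{k})$, but the content is identical.
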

\begin{proof}
Since $\mathcal{S}$ is $m$-dimensional and has a non-zero element $s$ such that $s^m=0$ and $s^{m-1} \neq 0$, then $\mathcal{S}=\operatorname{Span}_{\F}\{1,s,\ldots,s^{m-1}\}$. For each $t$ in $\mathcal{S}$, consider the bilinear map $B_t$ on $\g$ defined by $B_t(x,y)=\bar{B}(x \otimes t,y \otimes 1)$, for all $x$ and $y$ in $\g$. Using the skew-symmetry property of the Lie bracket, and that $\bar{B}$ is symmetric and invariant, we obtain:
\begin{equation}\label{ejemplo 13}
\begin{split}
B_t([x,y],z)\!&=\!\!\bar{B}([x \otimes 1,y \otimes t],z \otimes 1)\!=\!\!\bar{B}(x \otimes 1,[y,z] \otimes t)\\
\,&=\!\bar{B}(z \otimes t,[x,y] \otimes 1)\!=\!\!B_t(z,[x,y]),\text{ for all }x,y,z \in \!\g.
\end{split}
\end{equation}
As $\g=[\g,\g]$, we deduce from \eqref{ejemplo 13} that $B_t$ is invariant and symmetric. We claim that $B_{s^{m-1}}$ is non-degenerate. Let $k \geq 1$, using that $\bar{B}$ is invariant, we get:
\begin{equation}\label{ejemplo 1}
\bar{B}([\g,\g] \otimes s^{m-1},[\g,\g] \otimes s^{k})=\bar{B}([[\g,\g],\g] \otimes s^{m-1+k},\g \otimes 1)=\{0\},
\end{equation}
Take $x$ in $\g$ such that $B_{s^{m-1}}(x,\g)=\{0\}$. Then, by \eqref{ejemplo 1} we have: $\bar{B}(x \otimes \!s^{m-1}\!,\g \otimes s^{k})\!\!=\!\{0\}$, for all $k \geq 0$, whence $\bar{B}(x \otimes s^{m-1},\g \otimes \mathcal{S})\!=\!\!\{0\}$. Since $\bar{B}$ is non-degenerate, then $x \!\otimes \!s^{m-1}\!\!=\!0$, and $x\!\!=\!0$, thus $B_{s^{m-1}}$ is non-degenerate.
\end{proof}

\section*{Acknowledgements}

The author thanks the support 
provided by post-doctoral fellowship
CONACYT 769309. 

\bibliographystyle{amsalpha}

\end{document}